\documentclass[11pt]{amsart}
\usepackage{amsmath,amssymb,amsthm,mathtools}
\usepackage{enumitem}
\usepackage{booktabs}
\usepackage{microtype}
\usepackage[hidelinks]{hyperref}

\newtheorem{theorem}{Theorem}[section]
\newtheorem{lemma}[theorem]{Lemma}
\newtheorem{proposition}[theorem]{Proposition}

\newtheorem{conjecture}[theorem]{Conjecture}
\theoremstyle{remark}
\newtheorem{remark}[theorem]{Remark}

\newcommand{\e}{\mathrm{e}}
\newcommand{\R}{\mathbb R}
\newcommand{\Q}{\mathbb Q}
\newcommand{\Z}{\mathbb Z}
\newcommand{\Hh}{\mathbb H}
\newcommand{\Res}{\operatorname*{Res}}
\newcommand{\vol}{\operatorname{vol}}

\newcommand{\wt}{\widetilde}

\title[Sign changes in the congruent number problem]
{Signs of Square-Free Fourier Coefficients of half-integral weight cusp forms and the Congruent Number Problem}
\author{Tao Wei}
\address{Department of Mathematics, Jinling Institute of Technology, Nanjing 211169, China}
\email{weitao@jit.edu.cn}
\author{Xuejun Guo}
\address{Department of Mathematics, Nanjing University, Nanjing 210093, China}
\email{guoxj@nju.edu.cn}

\subjclass[2020]{11F37, 11F30, 11E20, 11G05}
\keywords{Congruent numbers, ternary quadratic forms, theta series,
	half-integral weight modular forms,
	Shimura correspondence, sign changes}
\thanks{The authors are supported by the National Natural Science Foundation of China (12231009).}

\begin{document}

\begin{abstract}
	We study the sign distribution of square-free Fourier coefficients of
	half-integral weight cusp forms. We prove that, for half-integral weight
	cusp forms \(f\) satisfying the eigenform conditions and having a nonzero cuspidal Shimura lift, the numbers of positive and negative square-free
	Fourier coefficients up to \(X\) are both
	\(\gg_{f,\varepsilon}X^{4/7-\varepsilon}\). 
	
	As an application to the congruent-number problem, we consider the weight \(3/2\) cusp form whose Shimura lift is the weight \(2\) newform attached to $ E:y^2=x^3-x$. We prove that
	each each sign occurs among its square-free Fourier coefficients at  \(\gg_{\varepsilon}X^{4/7-\varepsilon}\) odd square-free
	integers \(t\leq X\). In particular, the square-free Fourier
	coefficients change sign infinitely often.
\end{abstract}

\maketitle

\section{Introduction}
A positive integer $t$ is called a congruent number if it is the
area of a right triangle with rational side lengths. For square-free $t$,
this is equivalent to the condition that the elliptic curve
\[
E_t:y^2=x^3-t^2x
\]
has positive Mordell--Weil rank over $\Q$. The curves $E_t$ are the
quadratic twists of $E:y^2=x^3-x$, and the problem of determining which
$t$ are congruent is a classical Diophantine problem associated with this
family.

For a ternary quadratic form \(Q\), let
\[
r_Q(n)=\#\{(x,y,z)\in\Z^3:Q(x,y,z)=n\}
\]
denote the number of representations of \(n\) by \(Q\). For odd square-free
\(t\), Tunnell's criterion is expressed in terms of the quadratic forms
\[
Q_T(x,y,z)=x^2+2y^2+8z^2,
\qquad
Q_1(x,y,z)=x^2+2y^2+32z^2.
\]
If $t$ is congruent, then
\begin{equation}\label{eq:tunnell-odd}
r_{Q_T}(t)=2r_{Q_1}(t),
\end{equation}
and the converse follows from the weak Birch--Swinnerton-Dyer conjecture
\cite{Tunnell}. Tunnell's proof relates the arithmetic function
$2r_{Q_1}(t)-r_{Q_T}(t)$ on odd indices to the Fourier coefficients of a
weight $3/2$ Hecke cusp eigenform whose Shimura lift is the weight $2$
newform attached to $E$. 

Qin obtained an alternative criterion in which the relevant weight $3/2$
form is itself a difference of two ternary theta series \cite{Qin}. In
addition to $Q_1$, consider
\[
Q_2(x,y,z)=2x^2+4y^2+9z^2-4yz.
\]

Put
\[
\Delta(n)=r_{Q_1}(n)-r_{Q_2}(n)
\]
and
\[
\Psi(z)=\frac12\bigl(\Theta_{Q_1}(z)-\Theta_{Q_2}(z)\bigr)
=\sum_{n\geq1}\frac{\Delta(n)}2q^n.
\]
Qin proved that $\Psi$ is an eigenform for $T(p^2)$ at every odd prime
$p$, and identified its Shimura lift with the weight $2$ newform attached
to $E$. A special case of Waldspurger's formula (see \cite{Qin}) gives that,
for every positive odd square-free integer \(t\),
\begin{equation}\label{eq:central-value}
	L(E_t,1)=\frac{\Delta(t)^2}{16\sqrt{t}}
	\int_1^\infty\frac{dx}{\sqrt{x^3-x}}.
\end{equation}

Formula~\eqref{eq:central-value} determines the absolute value of
$\Delta(t)$, but not its sign. Rodriguez--Villegas pointed out that the Fourier coefficients appearing in Waldspurger-type formulas are not necessarily of one sign, and no general arithmetic interpretation of their signs is known \cite{RodriguezVillegas}. Here the sign of $\Delta(t)$ distinguishes
\[
r_{Q_1}(t)>r_{Q_2}(t)
\qquad\text{from}\qquad
r_{Q_1}(t)<r_{Q_2}(t).
\]

In this paper, we show that each inequality holds for $\gg_\varepsilon X^{4/7-\varepsilon}$ odd square-free integers $t\leq X$. In particular, $\Delta(t)$ changes sign infinitely often. Every integer counted by either inequality is non-congruent. Indeed, the theorem of Coates and Wiles implies, since $E_t$ has complex multiplication, that $L(E_t,1)\ne0$ forces $E_t(\Q)$ to have Mordell--Weil rank zero \cite{CoatesWiles}.

The problem is thus a special case of the sign problem for square-free
Fourier coefficients of half-integral weight cusp forms. The signs of Fourier coefficients of half-integral weight forms have
been studied from several viewpoints. For a fixed square-free $t$ with
nonzero coefficient, Bruinier and Kohnen proved infinitely many sign
changes in prime-power subsequences inside the square class $tn^2$
\cite{BruinierKohnen}. Hulse, K\i ral, Kuan and Lim proved infinite sign
changes of square-free coefficients for level $4$ Hecke eigenforms by
continuing a square-free coefficient Dirichlet series to
$\Re(s)>3/4$ \cite{HKKL}. Lau, Royer and Wu combined this continuation
with Rankin--Selberg second moments to obtain polynomial lower bounds for
the numbers of positive and negative coefficients, as well as a separate
short-interval estimate for the number of sign changes, again at level
$4$ and in higher weight \cite{LRW}.

Jiang, Lau, L\"u, Royer and Wu later moved the continuation line to
$\Re(s)>1/2$ at level $4$ by exploiting additional cancellation in
additive twists \cite{JLRW}. Stronger results, including
positive-proportion results under the generalized Riemann hypothesis,
were established by Lester and Radziwi\l\l{} in the level-$4$ plus space,
with extensions to forms of level $4N$ when $N$ is odd and square-free
\cite{LesterRadz}.

The main new issue in passing from level $4$ to a general fixed level
is the treatment of additive twists. For level \(4\), the proof in
\cite{HKKL} distinguishes three cases according as the cusp represented by
\(u/d\) is equivalent to \(\infty\), \(1/2\), or \(0\).
For a general level \(N\), the cusp class of \(u/d\) depends on the
common divisors of \(d\) and \(N\). We establish a functional equation
for the additive twist at \(u/d\) in terms of the Fourier expansion of
\(f\) at the corresponding cusp. 

Using these functional equations, we continue
\[
M_f(s)=\sum_{\substack{t\geq1\\t\ \mathrm{square\text{-}free}}}
\frac{a_f(t)}{t^s}
\]
holomorphically to $\Re(s)>3/4$ for every fixed \(N\) divisible by \(4\). Combined with the Rankin--Selberg second moment at level
$N$, the square-free transfer of \cite[Lemma~9]{LRW}, and the pointwise
bound for square-free coefficients, this yields a lower bound
for each sign.

Throughout the paper, we normalize the Fourier expansion by
\[
f(z)=\sum_{n\ge1}a_f(n)n^{k/4-1/2}\e(nz),\qquad \e(z)=\exp(2\pi iz),
\]
for a form of weight $k/2$, with $k$ odd. Define
\[
P_f^\pm(X)=\#\{t\le X:t\text{ square-free and }\pm a_f(t)>0\}.
\]

Let $U_{3/2}(N)$ denote the subspace of $S_{3/2}(\Gamma_0(N))$ spanned
by unary theta series. For the trivial character, this is the space denoted
by $S_0(N,\mathbf 1)$ in \cite{PurkaitDecomposition}.

We now state our main sign result for cusp forms of level divisible by $4$.

\begin{theorem}\label{thm:general}
Let $N$ be a positive integer with $4\mid N$, $k\ge 3$ be odd. Let
$f\in S_{k/2}(\Gamma_0(N))$
have real normalized Fourier coefficients. Suppose that
\begin{enumerate}[label=\textup{(\arabic*)}]
\item for every prime $p$, one has
$f|T(p^2)=\lambda_f(p)f$ for some scalar $\lambda_f(p)$, where
$T(p^2)$ denotes $U(p^2)$ when $p\mid N$;
\item $f$ has a nonzero cuspidal Shimura lift;
\item when $k=3$, the form $f$ is orthogonal to the unary theta subspace
$U_{3/2}(N)$.
\end{enumerate}
Then, for every $\varepsilon>0$,
\[
P_f^+(X)\gg_{f,\varepsilon}X^{4/7-\varepsilon},\qquad
P_f^-(X)\gg_{f,\varepsilon}X^{4/7-\varepsilon}.
\]
In particular, both signs occur infinitely often among the nonzero
square-free coefficients of $f$, and these coefficients, ordered by their
indices, change sign infinitely often.
\end{theorem}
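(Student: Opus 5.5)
The approach follows the Lau--Royer--Wu strategy \cite{LRW}, fed by three inputs for general level $N$: an analytic continuation of $M_f(s)$ to $\Re(s)>3/4$, a Rankin--Selberg second moment for square-free coefficients, and a pointwise bound. The first input comes from the additive-twist functional equations. For $\gcd(u,d)=1$, set
\[
L(f,u/d,s)=\sum_n a_f(n)\e(nu/d)n^{-s}.
\]
Choose $\gamma\in SL_2(\Z)$ with $\gamma\infty=u/d$. Then $f|_{k/2}\gamma$ has a Fourier expansion at the cusp $u/d$, with width and coefficients depending only on the cusp class, which is determined by $\gcd(d,N)$. This yields a functional equation relating $L(f,u/d,s)$ to an additive twist of the expansion of $f$ at that cusp, with gamma factors of conductor about $d^2$. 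Phragm\'en--Lindel\"of then gives
\[
L(f,u/d,\sigma+it)\ll_\varepsilon (d(1+|t|))^{1-\sigma+\varepsilon}
\]
for $0\le\sigma\le1$, uniformly in $u$. The key point is that only finitely many cusp expansions occur, so the implied constants stay uniform.

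To get $M_f$, write $\mu^2(n)=\sum_{d^2\mid n}\mu(d)$. Then $M_f(s)=\sum_d\mu(d)\sum_{m}a_f(d^2m)(d^2m)^{-s}$. Next use hypothesis (1) to remove the inner $d^2$. Multiplicativity of the Hecke relations (Shimura's formula, valid with $U(p^2)$ at $p\mid N$) expresses $\sum_m a_f(d^2m)m^{-s}$ through $L(f,s)$, twisted by quadratic characters $\chi_{m}(\cdot)$ and by $\lambda_f$. The character twists are detected via Gauss sums as combinations of additive twists at $u/d$. Following \cite{HKKL}, the result is
\[
M_f(s)=\sum_{d}\frac{\mu(d)}{d^{2s}}\cdot(\text{finite combination of }L(f,u/d',s)\text{ with }d'\mid d),
\]
times a Dirichlet series in $\lambda_f(p)$ that is absolutely convergent for $\Re s>1/2$. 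The convexity bound makes the $d$-sum converge absolutely for $\Re s>3/4$ with polynomial growth in $|t|$, which gives the holomorphic continuation of $M_f$ to that half-plane. I expect this bookkeeping, namely the functional equation at every cusp class together with uniformity in $d$, to be the main obstacle, especially for $d$ sharing factors with $N$ where $U(p^2)$ replaces $T(p^2)$.

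The remaining two inputs are as follows. By Rankin--Selberg at level $N$, $\sum_{n\le X}|a_f(n)|^2\sim c_fX$. The square-free restriction keeps $\gg X$ through the transfer lemma \cite[Lemma~9]{LRW}, and this is where (2) and, for $k=3$, condition (3) are needed, since they prevent the square-free coefficients from being supported on finitely many square classes. For the pointwise bound, Waldspurger plus subconvexity (Conrey--Iwaniec / Blomer--Harcos type) give
\[
a_f(t)\ll_\varepsilon t^{1/4-\delta+\varepsilon}
\]
for square-free $t$; the trivial convexity bound $t^{1/4+\varepsilon}$ alone may suffice for exponent $4/7$, which I would check first.

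Finally, combine the pieces. The continuation to $\Re s>3/4$ together with a smoothed Perron formula gives $\sum_{t\le X}^{\flat}a_f(t)\ll X^{3/4+\varepsilon}$, and this holds in any short interval $[X,X+H]$ with error $X^{3/4+\varepsilon}$. Suppose $P_f^-(2X)-P_f^-(X)$ were small. Then on $[X,2X]$,
\[
\sum^\flat|a_f(t)|=\sum^\flat a_f(t)+2\sum^{\flat}_{a_f(t)<0}|a_f(t)|\le X^{3/4+\varepsilon}+2P_f^-\cdot\max|a_f|.
\]
On the other side, Cauchy--Schwarz gives
\[
X\ll\sum^\flat|a_f|^2\le\max|a_f|\sum^\flat|a_f|.
\]
Inserting $\max|a_f|\ll X^{1/4+\varepsilon}$ gives $\sum^\flat|a_f|\gg X^{3/4-\varepsilon}$, which only balances the first term. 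To gain, I would instead run the LRW argument in short intervals of length $H=X^{\theta}$, using the second moment in short intervals with error controlled by the same continuation. Optimizing $H$ against the $X^{3/4}$-type error and the $X^{1/4}$ pointwise bound yields the exponent $4/7$, and the positive case follows by replacing $f$ with $-f$.
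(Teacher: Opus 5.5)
There is a genuine gap, and it sits in the final step, which is exactly where the exponent $4/7$ has to come from. Your Hecke-relation reduction of the continuation to quadratic twists of modulus $d$ differs from the paper's route. The paper averages additive twists with denominators dividing $r^2$ and interpolates between the bounds $r^{2+5\delta}$ and $r^{-2}$. That part is not the problem.

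Run your own argument with a pointwise bound $|a_f(t)|\ll t^{\alpha+\varepsilon}$ on square-free $t$. It gives $\sum^\flat_{t\le X}|a_f(t)|\gg X^{1-\alpha-\varepsilon}$. This dominates the bound $X^{3/4+\eta}$ for the signed sum only when $\alpha<1/4$. In that case each signed part is $\gg X^{1-\alpha-\varepsilon}$, and dividing by $\max|a_f|$ gives $P_f^\pm(X)\gg X^{1-2\alpha-\varepsilon}$. So $4/7$ corresponds precisely to $\alpha=3/14$, and the convexity exponent $\alpha=1/4$ yields nothing, as you observed.

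Short intervals cannot rescue this. On $[X,X+H]$ the lower bound drops to $HX^{-1/4-\varepsilon}$. The bound for the signed sum coming from a continuation to $\Re(s)>3/4$ does not decrease with $H$. With a smooth weight of width $H$ it actually gets worse by a power of $X/H$, because of the polynomial growth of $M_f$ in $|\Im(s)|$. You would therefore need $H>X$, so no choice of $H$ turns the exponents $1/4$ and $3/4$ into $4/7$.

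What is missing is the quantitative pointwise bound $|a_f(t)|\ll_{f,\varepsilon}t^{3/14+\varepsilon}$ for square-free $t$. The paper takes this from Iwaniec's estimate $\widehat a_f(t)\ll t^{k/4-2/7+\varepsilon}$, in Waibel's general-level form. This is where hypothesis (3) is used when $k=3$. The second-moment transfer needs only (1) and (2), through $a_f(tn^2)\ll|a_f(t)|n^{\varepsilon}$ and Deligne's bound for the cuspidal lift.

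With that bound, your long-interval argument closes at once:
\begin{itemize}
\item $\sum^\flat_{t\le X}|a_f(t)|\gg X^{11/14-\varepsilon}$, which beats $X^{3/4+\eta}$;
\item hence each sign contributes $\gg X^{11/14-\varepsilon}$;
\item dividing by $X^{3/14+\varepsilon}$ gives $X^{4/7-\varepsilon}$.
\end{itemize}

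Your Waldspurger route could also supply this input, but only under two conditions. First, you must commit to a saving of at least $1/14$, i.e.\ $L(1/2,F\otimes\chi_t)\ll t^{3/7+\varepsilon}$ uniformly in square-free $t$. A Burgess-type bound would suffice; an unspecified $\delta>0$ does not, and a saving as small as $1/22$ only gives the exponent $6/11$. Second, you need an upper-bound form of Waldspurger's formula that covers $t$ sharing primes with $N$.
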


By \cite{Qin}, $\Psi$ is a Hecke eigenform at every odd prime and its
Shimura lift is the weight $2$ newform attached to $E$.
Section~\ref{sec:application} verifies the condition of Theorem~\ref{thm:general} and gives the following.

\begin{theorem}\label{thm:quadratic}
For every $\varepsilon>0$,
\[
\#\{t\le X:t\text{ odd and square-free},\ r_{Q_1}(t)>r_{Q_2}(t)\}
\gg_\varepsilon X^{4/7-\varepsilon},
\]
and
\[
\#\{t\le X:t\text{ odd and square-free},\ r_{Q_1}(t)<r_{Q_2}(t)\}
\gg_\varepsilon X^{4/7-\varepsilon}.
\]
In particular, $\Delta(t)$ changes sign infinitely often as $t$ ranges over
odd square-free positive integers.
\end{theorem}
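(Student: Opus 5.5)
The plan is to deduce Theorem~\ref{thm:quadratic} from Theorem~\ref{thm:general} applied to a suitable form built from $\Psi$. There is one complication: Theorem~\ref{thm:general} counts all square-free $t$, while here we need odd square-free $t$. Also, $\Psi$ is only known to be an eigenform of $T(p^2)$ for odd $p$. So we first project away the even indices. We work at a level $N$ divisible by $4$ at which $\Theta_{Q_1}$ and $\Theta_{Q_2}$ are modular. The discriminants of $Q_1$ and $Q_2$ suggest level $128$ with trivial (or at most a quadratic) character. We then consider
\[
\Psi^{\mathrm{odd}}=\sum_{n\ \mathrm{odd}}\tfrac{\Delta(n)}2q^n=\Psi-\Psi|U(2)|V(2).
\]
Equivalently, $\Psi^{\mathrm{odd}}$ is the twist of $\Psi$ by the trivial character modulo $2$. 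It is a cusp form of weight $3/2$ on $\Gamma_0(N')$ for some $N'$ that is a power of $2$ (say $N'=256$). If a nontrivial quadratic character appears, we first twist by $\chi_{-4}$ or $\chi_8$ to make the character trivial. This changes the signs only on residue classes, and we handle it by applying the theorem to each twist separately.

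Next we verify the hypotheses of Theorem~\ref{thm:general} for $f=\Psi^{\mathrm{odd}}$ with $k=3$.
\begin{enumerate}[label=(\arabic*)]
\item For odd $p$, $T(p^2)$ commutes with the odd-part projection, because the Hecke relations only mix the indices $n$, $np^2$ and $n/p^2$, which have the same parity. Qin's result therefore gives $f|T(p^2)=\lambda(p)f$ with $\lambda(p)=a_E(p)$. For $p=2$, $U(4)$ maps $q^n$ to $q^{n/4}$ on indices $n\equiv0\pmod 4$. Since $f$ has no even-index coefficients, $f|U(4)=0$, and we may take $\lambda_f(2)=0$.
\item The Shimura lift of $f$ agrees with that of $\Psi$ away from $2$, so it is the newform of $E$ with its $2$-Euler factor removed. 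This is a nonzero cusp form.
\item $f$ must be orthogonal to $U_{3/2}(N')$. Unary theta series $\sum\psi(n)nq^{tn^2}$ are eigenforms with Shimura lifts that are Eisenstein/CM series of weight $2$ and Hecke eigenvalues $\psi(p)(1+p)$ on $T(p^2)$. These differ from $a_E(p)$, which satisfies $|a_E(p)|\le2\sqrt p<p+1$. Since $T(p^2)$ is self-adjoint for the Petersson product at $p\nmid N'$, eigenvectors with distinct eigenvalues are orthogonal, and (3) follows.
\end{enumerate}
The coefficients of $f$ are real. With the normalization $a_f(n)n^{1/4}=\Delta(n)/2$ for odd $n$, the sign of $a_f(t)$ equals the sign of $\Delta(t)$ for odd $t$. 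For even $t$ we have $a_f(t)=0$, so $P_f^\pm(X)$ counts exactly the odd square-free $t\le X$ with $\pm\Delta(t)>0$. Theorem~\ref{thm:general} then gives both bounds. Since $\Delta(t)$ takes both signs on infinitely many odd square-free $t$, the sequence $\Delta(t)$ changes sign infinitely often.

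The main obstacle is bookkeeping the level and character. We must confirm that $\Theta_{Q_1}-\Theta_{Q_2}$, after the parity projection, lies in $S_{3/2}(\Gamma_0(N'))$ with trivial character, as Theorem~\ref{thm:general} requires, and not in a space with character $\chi_{8}$ or $\chi_{-4}$. I would first compute $\det$ and the level of $Q_1$ and $Q_2$, then read off the theta character $\bigl(\frac{2\det}{\cdot}\bigr)$. If it is nontrivial, I would twist by the corresponding character modulo $8$ (again splitting into residue classes modulo $8$) and rerun the hypothesis check above. The lower bound then holds for at least one of finitely many residue classes, and that suffices.
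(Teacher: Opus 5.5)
Your proposal is correct in substance and follows the same outline as the paper's proof. You apply Theorem~\ref{thm:general} with $k=3$ to the theta difference, take the odd-$p$ eigenvalues and the lift $\Phi$ from Qin, get the $U(4)$-eigenvalue $0$ from the vanishing of even-index coefficients, and read off the signs from $\Delta(t)=2t^{1/4}a(t)$.

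You differ from the paper in two places.

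\emph{Odd-index projection.} You project onto odd indices and pass to level $256$. The paper instead notes that in Qin's identity $\Theta_{Q_1}-\Theta_{Q_2}=2G\theta_2$ every exponent of $G\theta_2$ is odd. So in fact $\Psi^{\mathrm{odd}}=\Psi$, one can stay at level $128$, and the lift is exactly $\Phi$ (recall $a_E(2)=0$). Your projection is harmless and makes the argument independent of that identity, but it is not needed.

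\emph{Orthogonality to unary theta series.} The paper cites Purkait's decomposition: the $\Phi$-Shimura component lies in $U_{3/2}(128)^\perp$. You instead compare $T(p^2)$-eigenvalues at a good prime, using self-adjointness and $|a_E(p)|\le 2\sqrt p<p+1$. That is a valid and more self-contained argument. It rests on the standard fact that $U_{3/2}(N)$ is spanned by the $T(p^2)$-eigenforms $\sum\psi(n)nq^{tn^2}$. With trivial nebentypus $\psi$ is quadratic, so their eigenvalues are indeed $\pm(p+1)$.

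The one point you leave open, the character, is not an obstacle: it is trivial. You should state this rather than keep a fallback. Qin places $\Psi$ in $S_{3/2}(\Gamma_0(128))$ with trivial character. For $Q_1$ this is visible from $\Theta_{Q_1}(z)=\theta(z)\theta(2z)\theta(32z)$, whose character is $\left(\frac{2}{\cdot}\right)\left(\frac{32}{\cdot}\right)=\left(\frac{64}{\cdot}\right)$. The form $Q_2$ has the same Gram determinant $512$.

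The fallback as you wrote it would not have worked anyway, for two reasons:
\begin{enumerate}[label=\textup{(\arabic*)}]
\item Twisting by a quadratic character $\chi$ multiplies the nebentypus by $\chi^2$, so it cannot remove a quadratic nebentypus.
\item Lower bounds for the sign counts of a twisted form only control $\#\{\chi(t)=1,\ a(t)>0\}+\#\{\chi(t)=-1,\ a(t)<0\}$. They give no bound on each sign of $a(t)$ separately.
\end{enumerate}
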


Numerical calculations in Section~\ref{sec:application} suggest that,
among odd square-free integers $t$ with $\Delta(t)\ne0$, the two signs are
asymptotically equidistributed; see Table~\ref{tab:numerical-signs} and
Conjecture~\ref{conj:sign-equidistribution}.

The paper is organized as follows. In Section \ref{sec:functional} we prove functional equations for additive twists at arbitrary cusps. In Section \ref{sec:squarefree-series} we continue the square-free coefficient Dirichlet series to $\Re(s)>3/4$. In Section \ref{sec:second-moment} we establish the general-level Rankin--Selberg second moment, transfer it to square-free indices and prove Theorem \ref{thm:general}. In Section \ref{sec:application} we apply the general theorem to the theta-series difference associated with $Q_1$ and $Q_2$ and derive the corresponding result for non-congruent numbers.

\section{Functional equations at arbitrary cusps}\label{sec:functional}
The functional equation for an additive twist is obtained by moving the
rational point at which the twist is taken to a cusp of $\Gamma_0(N)$.
 Assume that
$4\mid N$, that $k$ is odd, and that $f\in S_{k/2}(\Gamma_0(N))$. For
$\gamma=\begin{pmatrix}a&b\\c&d\end{pmatrix}\in\Gamma_0(N)$, the factor of
automorphy is
\[
j(\gamma,z)=\varepsilon_d^{-1}\left(\frac cd\right)(cz+d)^{1/2},
\]
where $\varepsilon_d=1$ for $d\equiv1\pmod4$ and $\varepsilon_d=i$ for
$d\equiv3\pmod4$, and $\left(\frac cd\right)$ denotes Shimura's extension
of the Jacobi symbol. If $\xi_\gamma=(\gamma,j(\gamma,z))$ is the
corresponding metaplectic element, then
\[
f|[\xi_\gamma]_{k/2}(z)=j(\gamma,z)^{-k}f(\gamma z)
\]
in the notation of \cite{Shimura}. In particular,
$f|[\xi_\gamma]_{k/2}=f$ for $\gamma\in\Gamma_0(N)$.

For every cusp $\mathfrak a$ of $\Gamma_0(N)$, fix a scaling matrix
$\sigma_{\mathfrak a}\in\mathrm{SL}_2(\Z)$ satisfying
$\sigma_{\mathfrak a}\infty=\mathfrak a$, together with a lift
$\wt\sigma_{\mathfrak a}$ to the metaplectic cover, and set
$f_{\mathfrak a}=f|[\wt\sigma_{\mathfrak a}]_{k/2}$. If
$w_{\mathfrak a}$ is the cusp width, then
\[
f_{\mathfrak a}(z)=\sum_{n+\kappa_{\mathfrak a}>0}b_{\mathfrak a}(n)
\e\left(\frac{(n+\kappa_{\mathfrak a})z}{w_{\mathfrak a}}\right)
\]
for some multiplier shift $0\leq\kappa_{\mathfrak a}<1$.

Fix a lift of $S_0=\begin{pmatrix}0&-1\\1&0\end{pmatrix}$ and set
$f_0=f|[\wt S_0]_{k/2}$. For $q\in\Q$, define
\[
\Lambda(f,q,s)=\int_0^\infty f(iy+q)y^{s+k/4-1/2}\,\frac{dy}{y}.
\]
Since $f$ is cuspidal at every cusp, this integral is entire in $s$. We
write $s'=s+k/4-1/2$.

\begin{lemma}\label{lem:metaplectic-constant}
Let $A,B\in\mathrm{SL}_2(\R)$, and choose lifts $\widetilde A$,
$\widetilde B$ and $\widetilde{AB}$ to the metaplectic cover. The
automorphy factor of $\widetilde A\widetilde B$ differs from that of
$\widetilde{AB}$ by a constant of absolute value one. Consequently,
changing a lift in a slash operator changes the resulting function only
by such a constant.
\end{lemma}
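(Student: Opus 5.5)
I will work directly with Shimura's description of the metaplectic cover. Its elements are pairs $(A,\phi)$ with $A=\begin{pmatrix}a&b\\c&d\end{pmatrix}\in\mathrm{SL}_2(\R)$ and $\phi$ a holomorphic function on $\Hh$ satisfying $\phi(z)^2=t\,(cz+d)$ for some $t\in\mathbb{C}$ with $|t|=1$. The group law is
\[
(A,\phi(z))(B,\psi(z))=(AB,\phi(Bz)\psi(z)),
\]
and the slash operator is $f|[(A,\phi)]_{k/2}(z)=\phi(z)^{-k}f(Az)$. The plan is to show that any two elements lying over the same matrix have automorphy factors that differ by a unimodular constant. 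Both claims of the lemma then follow.

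\textbf{Step 1.} Write $\widetilde A\widetilde B=(AB,\phi(Bz)\psi(z))$ and $\widetilde{AB}=(AB,\chi(z))$. Let $j(M,z)=c_Mz+d_M$ denote the usual cocycle. Squaring gives
\[
(\phi(Bz)\psi(z))^2=t_At_B\,j(A,Bz)j(B,z)=t_At_B\,j(AB,z),
\]
where the second equality is the classical cocycle relation in $\mathrm{SL}_2(\R)$. We also have $\chi(z)^2=t_{AB}\,j(AB,z)$.

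\textbf{Step 2.} Because $j(AB,z)$ has no zeros on $\Hh$, the quotient
\[
h(z)=\frac{\phi(Bz)\psi(z)}{\chi(z)}
\]
is holomorphic on $\Hh$, and its square is the constant $t_At_B/t_{AB}$, which has absolute value one. A continuous function on the connected set $\Hh$ whose square is constant can take only the two values $\pm\sqrt{\cdot}$. Being continuous and two-valued on a connected set, it must be constant. Hence $h\equiv c$ with $|c|=1$.

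\textbf{Step 3.} For the consequence about slash operators, take any two lifts $(M,\phi_1)$ and $(M,\phi_2)$ of the same matrix $M$. The argument of Step 2 shows $\phi_1=c\,\phi_2$ with $|c|=1$. Therefore
\[
f|[(M,\phi_1)]_{k/2}=c^{-k}\,f|[(M,\phi_2)]_{k/2},
\]
and $|c^{-k}|=1$. Applying this with $M=AB$ also shows that $f|[\widetilde A]|[\widetilde B]=f|[\widetilde A\widetilde B]$ agrees with $f|[\widetilde{AB}]$ up to a unimodular constant.

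The only delicate point is the step in Step 2 where the square root is shown to have no branch ambiguity. This is handled by the connectedness of $\Hh$ together with the fact that $cz+d\ne0$ there. Everything else is bookkeeping with the group law.
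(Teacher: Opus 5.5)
Your proof is correct and is essentially the paper's argument. Both rest on the fact that two holomorphic functions on the connected set $\Hh$, whose squares are unimodular multiples of the same nowhere-vanishing function $j(AB,z)$, must differ by a unimodular constant; your Step~3 treats the change of lift with this same square-root argument, where the paper phrases it through the center of the cover. Your tracking of the constant $t_At_B/t_{AB}$ is slightly more careful than the paper's claim that the quotient ``has square one.'' That claim is literally true only when those unimodular constants agree, and already for the theta multiplier $j(\gamma,z)^2=\varepsilon_d^{-2}(cz+d)$ with $\varepsilon_d^{-2}=\pm1$.
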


\begin{proof}
The two automorphy factors are holomorphic square roots of the same
nowhere-vanishing holomorphic function on $\Hh$. Their quotient has square
one and is therefore constant, since $\Hh$ is connected. A different
choice of lift differs by an element of the center of the metaplectic
cover, which acts by a constant of absolute value one.
\end{proof}

At level $4$, Hulse--K\i ral--Kuan--Lim divide the denominator $d$ into the three cases $4\mid d$, $2\parallel d$ and $2\nmid d$, corresponding to the three cusps $\infty$, $1/2$ and $0$ \cite[Lemma 4.3]{HKKL}. The following lemma replaces that case-by-case decomposition by one valid at an arbitrary level divisible by $4$.

\begin{lemma}\label{lem:additive-fe}
Let $u/d\in\Q$, where $d>0$ and $(u,d)=1$. Put
\[
d_0=(d,N),\qquad d=d_0d_1,
\]
and
\[
N_0=\prod_{\substack{p^\nu\parallel N\\p\mid d}}p^\nu,
\qquad N=N_0N_1.
\]
Since $(d,N_1u)=1$, one may choose integers $v,e$, with odd $v>0$, such that
\[
de-N_1uv=1.
\]
\begin{enumerate}[label=\textup{(\arabic*)}]
\item If $(d,N)=1$, then there is a constant $\omega_0(u,d)$, depending on the chosen lift of $S_0$, such that $|\omega_0(u,d)|=1$ and
\[
\Lambda\left(f,\frac ud,s\right)
=\omega_0(u,d)d^{1-2s}
\Lambda\left(f_0,\frac{N_1v}{d},1-s\right).
\]
\item Suppose that $(d,N)\ne1$. Then there exist integers $\alpha,\beta$, with $\beta\ne0$, such that
\[
\alpha N_1-\beta d_1=1,
\qquad
\beta v\equiv\alpha d\pmod{N_0}.
\]
Set
\[
\sigma_\beta=
\begin{pmatrix}1&-d_1\\-\beta&N_1\alpha\end{pmatrix},
\qquad
f_{-1/\beta}=f|[\wt\sigma_\beta]_{k/2},
\]
where a lift $\wt\sigma_\beta$ is fixed. Then $\sigma_\beta\in\mathrm{SL}_2(\Z)$ and $\sigma_\beta\infty=-1/\beta$. Moreover, there is a constant $\omega_\beta$, depending on the chosen lift, such that $|\omega_\beta|=1$ and
\[
\Lambda\left(f,\frac ud,s\right)
=\omega_\beta d^{1-2s}
\Lambda\left(f_{-1/\beta},\frac{N_1v}{d},1-s\right).
\]
\end{enumerate}
\end{lemma}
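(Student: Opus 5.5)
The plan is the standard one for additive twists. We find a matrix $\gamma\in\Gamma_0(N)$ that maps the vertical ray above $u/d$ onto a vertical ray above $N_1v/d$, up to the scaling matrix of the relevant cusp and the inversion $y\mapsto 1/(d^2y)$. Then we use $f|[\xi_\gamma]_{k/2}=f$ together with Lemma~\ref{lem:metaplectic-constant} to absorb all metaplectic ambiguities into a unimodular constant.

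Concretely, put
\[
\gamma=\begin{pmatrix} e & u\\ N_1v & d\end{pmatrix}.
\]
Then $\det\gamma=de-N_1uv=1$. Moreover $\gamma\in\Gamma_0(N)$ exactly when $N_0\mid N_1v$, which is not true in general. So we work instead with the matrix
\[
M=\begin{pmatrix}1&u/d\\0&1\end{pmatrix}\begin{pmatrix}0&-1/d\\ d&0\end{pmatrix}\begin{pmatrix}1&-N_1v/d\\0&1\end{pmatrix},
\]
which sends $-N_1v/d+iy$ to $u/d+i/(d^2y)$. A direct computation gives
\[
M=\begin{pmatrix} u & -(uN_1v+1)/d\\ d & -N_1v\end{pmatrix}=\begin{pmatrix}u&-e\\ d&-N_1v\end{pmatrix}\in \mathrm{SL}_2(\Z).
\]
(Signs will be adjusted, e.g.\ conjugating by $\mathrm{diag}(1,-1)$ to get $+N_1v$ in the final twist. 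Real-analytically this is harmless, since $f(iy+q)$ versus $f_0(iy-q)$ is handled by choosing $M$ with the right signs.)

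Next we factor $M=\gamma'\sigma$ with $\gamma'\in\Gamma_0(N)$ and $\sigma$ the scaling matrix of the cusp $M\infty=u/d$.

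In case (1), $(d,N)=1$, the cusp $u/d$ is equivalent to $0$. Since $N_1=N$, we have $M S_0^{-1}=\begin{pmatrix}e&u\\ Nv&d\end{pmatrix}$ up to sign, which lies in $\Gamma_0(N)$. So $f(Mz)$ equals $f_0(z)$ times the automorphy factor $(dz-N_1v)^{k/2}$, up to a unimodular constant by Lemma~\ref{lem:metaplectic-constant}.

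In case (2), $\alpha,\beta$ are produced by CRT. We have $(N_1,d_1)=1$ because any prime dividing $d_1$ and $N$ divides $d$, hence lies in $N_0$, and $(N_0,N_1)=1$. So we can solve $\alpha N_1-\beta d_1=1$. The solutions are $(\alpha+d_1t,\beta+N_1t)$, and we choose $t$ to impose the congruence $\beta v\equiv\alpha d\pmod{N_0}$. This is a linear congruence in $t$ modulo $N_0$ whose coefficient is $N_1v-d_1d$; a short check shows it is coprime to $N_0$, using $v$ coprime to $d_0$ and a separate treatment of primes dividing $d_1$ versus $N_0/d_0$. We also ensure $\beta\neq0$. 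Then we verify that $M\sigma_\beta^{-1}$ has lower-left entry divisible by $N$. Modulo $N_1$ this follows from the entries of $\sigma_\beta$, and modulo $N_0$ it is precisely the congruence $\beta v\equiv\alpha d$. The lower-right entry of $M\sigma_\beta^{-1}$ is checked similarly. Hence $f(Mz)=c\,(dz-N_1v)^{k/2}f_{-1/\beta}(z)$ with $|c|=1$.

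Finally we substitute into $\Lambda$. With $z=N_1v/d+iy$ (after the sign convention) we have $dz-N_1v=idy$. Changing variables $y\mapsto 1/(d^2y)$ in $\int_0^\infty f(u/d+iy)y^{s'}dy/y$ produces $(idy)^{k/2}$ and $d^{-2s'}y^{-s'}$. Combining the powers gives exponent $k/2-s'=(1-s)+k/4-1/2$ in $y$ and $d^{k/2-2s'}=d^{1-2s}$, which is exactly $\omega\, d^{1-2s}\Lambda(\cdot,N_1v/d,1-s)$. Convergence is fine since both functions are cuspidal.

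The main obstacle is the CRT step in case (2): showing the congruence for $t$ is solvable modulo $N_0$ and that the resulting $M\sigma_\beta^{-1}$ really lies in $\Gamma_0(N)$. This needs careful bookkeeping of the primes of $N_0$ that divide $d$ to a power lower than in $N$.
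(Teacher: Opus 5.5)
Your proposal is correct and follows the paper's proof. Your $M=\begin{pmatrix}u&-e\\ d&-N_1v\end{pmatrix}$ is the paper's $\gamma S_0$ in case (1) and $-\gamma\sigma_\beta$ in case (2), so your $M\sigma_\beta^{-1}$ is $-\gamma$, and your congruence in $t$ with coefficient $N_1v-dd_1$ is exactly the paper's (2.3)--(2.4).

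Small corrections:
\begin{enumerate}
\item $M$ already sends $N_1v/d+iy$ (not $-N_1v/d+iy$) to $u/d+i/(d^2y)$, so no sign adjustment is needed.
\item The coprimality check needs no case split, since every prime of $N_0$ divides $d$ and hence not $v$.
\item You should also include the easy parity argument showing that $v$ can be taken odd and positive, since the statement asserts this.
\end{enumerate}
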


\begin{proof}
We first justify the choice of $v$ and $e$. By the definitions of $N_0$ and
$N_1$, one has $(d,N_1)=1$, and hence $(d,N_1u)=1$. Thus the equation
\[
de-N_1uv=1
\]
has integral solutions. If $d$ is even, then $N_1u$ is odd and the equation
itself forces $v$ to be odd. If $d$ is odd, the general solution
\[
e=e_0+N_1ut,\qquad v=v_0+dt
\]
allows us to choose the parity of $v$. Adding a sufficiently large multiple
of $2d$ if necessary, we may also arrange that $v>0$.

Assume first that $(d,N)=1$. Then $N_1=N$, and
\[
\gamma=\begin{pmatrix}e&u\\Nv&d\end{pmatrix}\in\Gamma_0(N).
\]
Put
\[
w=\frac{Nv}{d}+iy.
\]
A direct calculation, using $de-Nuv=1$, gives
\[
\gamma S_0w
=\frac{u w-e}{d w-Nv}
=\frac ud+\frac{i}{d^2y}.
\tag{2.1}
\]
After the change of variables $y\mapsto 1/(d^2y)$ in the defining Mellin
integral, it follows that
\[
\Lambda\left(f,\frac ud,s\right)
=d^{-2s'}\int_0^\infty
f\left(\gamma S_0\left(\frac{Nv}{d}+iy\right)\right)
y^{-s'}\,\frac{dy}{y}.
\tag{2.2}
\]
The product of the automorphy factors attached to the chosen lifts of
$\gamma$ and $S_0$ and the automorphy factor of a chosen lift of
$\gamma S_0$ are holomorphic square roots of the same linear function
$d w-Nv$. Lemma~\ref{lem:metaplectic-constant} shows that their quotient
is constant. Since $f|[\xi_\gamma]_{k/2}=f$, there is therefore a constant
$\omega_0(u,d)$ with $|\omega_0(u,d)|=1$ such that
\[
f(\gamma S_0w)
=\omega_0(u,d)(d w-Nv)^{k/2}f_0(w).
\]
On the line $w=Nv/d+iy$, one has $d w-Nv=idy$. Substituting this into
(2.2), and absorbing the fixed power of $i$ into $\omega_0(u,d)$, gives
\[
\Lambda\left(f,\frac ud,s\right)
=\omega_0(u,d)d^{-2s'+k/2}
\Lambda\left(f_0,\frac{Nv}{d},1-s\right).
\]
Since $-2s'+k/2=1-2s$, this proves part~\textup{(1)}.

Now suppose that $(d,N)\ne1$. We first verify the existence of $\alpha$
and $\beta$. Since $(d_1,N_1)=1$, choose one solution $(\alpha_0,\beta_0)$
of
\[
\alpha_0N_1-\beta_0d_1=1.
\]
All solutions are of the form
\[
\alpha=\alpha_0+d_1t,
\qquad
\beta=\beta_0+N_1t,
\qquad t\in\Z.
\]
The remaining congruence is equivalent to
\[
(N_1v-dd_1)t\equiv \alpha_0d-\beta_0v\pmod{N_0}.
\tag{2.3}
\]
We claim that
\[
(N_1v-dd_1,N_0)=1.
\tag{2.4}
\]
Indeed, let $p\mid N_0$. Then $p\mid d$ and $p\nmid N_1$. Reducing
$de-N_1uv=1$ modulo $p$ shows that $p\nmid v$. Hence
\[
N_1v-dd_1\equiv N_1v\not\equiv0\pmod p.
\]
This proves (2.4), so (2.3) has a solution. Replacing $t$ by $t+N_0m$ if
necessary, we may also arrange that $\beta\ne0$.

Define
\[
\gamma=
\begin{pmatrix}
e\beta-uN_1\alpha&e-ud_1\\
N_1(v\beta-\alpha d)&N_1v-dd_1
\end{pmatrix}.
\]
A direct multiplication gives
\[
\gamma\sigma_\beta=
\begin{pmatrix}-u&e\\-d&N_1v\end{pmatrix}.
\tag{2.5}
\]
Since $de-N_1uv=1$, both matrices in (2.5) have determinant one. Moreover,
\[
N\mid N_1(v\beta-\alpha d)
\]
by the defining congruence, so $\gamma\in\Gamma_0(N)$.

Put
\[
w=\frac{N_1v}{d}+iy.
\]
Using (2.5), we obtain the exact fractional-linear identity
\[
\gamma\sigma_\beta w
=\frac{-u w+e}{-d w+N_1v}
=\frac ud+\frac{i}{d^2y}.
\tag{2.6}
\]
After the same change of variables as above,
\[
\Lambda\left(f,\frac ud,s\right)
=d^{-2s'}\int_0^\infty
f\left(\gamma\sigma_\beta
\left(\frac{N_1v}{d}+iy\right)\right)
y^{-s'}\,\frac{dy}{y}.
\tag{2.7}
\]
The lower row of $\gamma\sigma_\beta$ is $(-d,N_1v)$. The automorphy
factor obtained from the chosen lifts of $\gamma$ and $\sigma_\beta$ and
the factor attached to a lift of $\gamma\sigma_\beta$ are holomorphic
square roots of the same linear function $-d w+N_1v$. By
Lemma~\ref{lem:metaplectic-constant}, their quotient is constant. Hence
there is a constant $\omega_\beta$, with $|\omega_\beta|=1$ and independent
of $w$, such that
\[
f(\gamma\sigma_\beta w)
=\omega_\beta(-d w+N_1v)^{k/2}f_{-1/\beta}(w).
\]
On the line $w=N_1v/d+iy$, the linear factor is $-idy$. Substitution into
(2.7), followed by absorbing the fixed power of $-i$ into $\omega_\beta$,
gives
\[
\Lambda\left(f,\frac ud,s\right)
=\omega_\beta d^{1-2s}
\Lambda\left(f_{-1/\beta},\frac{N_1v}{d},1-s\right).
\]
This proves part~\textup{(2)}.
\end{proof}

\begin{remark}\label{rem:structural-fe}
We relate the scaling matrices in Lemma~\ref{lem:additive-fe} to the fixed
matrices chosen at the beginning of the section. Suppose that
$\sigma_\beta\infty$ is equivalent to the fixed cusp representative
$\mathfrak a$. Then there are $\gamma\in\Gamma_0(N)$ and $m\in\Z$ such
that
\[
\gamma\sigma_\beta=\sigma_{\mathfrak a}(\pm T^m),
\qquad
T=\begin{pmatrix}1&1\\0&1\end{pmatrix}.
\]
Indeed, choose $\gamma$ so that
$\gamma\sigma_\beta\infty=\sigma_{\mathfrak a}\infty$; then
$\sigma_{\mathfrak a}^{-1}\gamma\sigma_\beta$ belongs to the stabilizer
of $\infty$ in $\mathrm{SL}_2(\Z)$ and hence equals $\pm T^m$. After
choosing lifts, Lemma~\ref{lem:metaplectic-constant} gives a constant
$\zeta$, $|\zeta|=1$, for which
\[
f_{-1/\beta}(z)=\zeta f_{\mathfrak a}(z+m),
\qquad
\Lambda(f_{-1/\beta},q,s)
 =\zeta\Lambda(f_{\mathfrak a},q+m,s).
\]
The same argument applies to the matrix used in part~\textup{(1)}. Thus,
for the analytic argument below, the functional equation always has the
form
\[
\Lambda\left(f,\frac ud,s\right)
=\omega(u,d)d^{1-2s}\Lambda(f_{\mathfrak a},q',1-s)
\]
for one of the finitely many fixed cusp expansions $f_{\mathfrak a}$,
some $q'\in\Q$, and $|\omega(u,d)|=1$.
\end{remark}

\section{The square-free coefficient Dirichlet series}\label{sec:squarefree-series}

Hulse, K{\i}ral, Kuan and Lim continue the square-free coefficient
Dirichlet series at level $4$ by treating separately the three cusps of
$\Gamma_0(4)$. For a general level $N$, Lemma~\ref{lem:additive-fe}
replaces this three-case analysis. Applied to the additive twists occurring
in $D_r(s)$, it gives the estimate on the left boundary of the interpolation
strip, while the estimate on the right boundary follows from absolute
convergence. The Phragm\'en--Lindel\"of principle then yields the decay in
$r$ needed to sum over the square divisors.

Throughout Sections~\ref{sec:squarefree-series} and
\ref{sec:second-moment}, $f$ is an eigenform for $T(p^2)$ at every prime
and has a nonzero cuspidal Shimura lift. Real Fourier coefficients and, in
weight $3/2$, orthogonality to the unary theta subspace are used only in
the proof of Theorem~\ref{thm:general}. We shall use the notation
\[
\widehat a_f(n)=a_f(n)n^{k/4-1/2},
\qquad
\chi_t(n)=\left(\frac{(-1)^{(k-1)/2}t}{n}\right)
\]
for square-free positive integers $t$, where the second symbol is the
Kronecker symbol.

\begin{lemma}\label{lem:square-index}
Let $t$ be square-free. For every $\varepsilon>0$,
\[
a_f(tn^2)\ll_{f,\varepsilon}|a_f(t)|n^\varepsilon.
\]
\end{lemma}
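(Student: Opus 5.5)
The plan is to use the Hecke relation of Shimura for half-integral weight eigenforms, which expresses the coefficients along a square class through a multiplicative convolution, and then bound each term with the Deligne bound for the integral weight Shimura lift.

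\emph{Step 1: the generating identity.} Hypothesis (1) says that $f|T(p^2)=\lambda_f(p)f$ for every prime $p$. For $p\nmid N$, Shimura's explicit formula for $T(p^2)$ acting on the coefficients $\widehat a_f(tn^2)$ gives the standard identity
\[
\sum_{n\ge1}\frac{\widehat a_f(tn^2)}{n^s}
=\widehat a_f(t)\,\frac{L^{(N)}\bigl(s-(k-3)/2+1,\chi_t\bigr)}{L^{(N)}_{F}(s)}\times(\text{factors at }p\mid N),
\]
where $L_F$ is the Hecke $L$-function of the lift with eigenvalues $\lambda_f(p)$. For $p\mid N$, the operator is $U(p^2)$, and the relation $\widehat a_f(tp^{2m})=\lambda_f(p)^m\widehat a_f(t)$ follows directly. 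After the normalization $a_f(n)=\widehat a_f(n)n^{1/2-k/4}$ this gives $a_f(tn^2)=a_f(t)\,c_t(n)$ with $c_t$ multiplicative. Equivalently, $c_t(p^m)$ is a polynomial in $\lambda_f(p)p^{-(k-2)/2}$ and $\chi_t(p)p^{-1/2}$, namely $c_t(n)=\sum_{d\mid n}\mu(d)\chi_t(d)d^{-1/2}\lambda^*(n/d)$ up to the bad-prime modifications. Here $\lambda^*$ denotes the normalized Hecke eigenvalues of the lift.

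\emph{Step 2: bounding $c_t(n)$.} By hypothesis (2) the lift is a cusp form of weight $k-1$. Since it is an eigenform with these eigenvalues, Deligne's bound gives $|\lambda^*(p^m)|\le m+1$ at good primes. At bad primes the eigenvalue of $U(p^2)$ satisfies $|\lambda^*(p)|\le 1$, because it is an eigenvalue of $U(p)$ on a newform of lower level, or is zero. Hence $|c_t(n)|\le \sum_{d\mid n}\tau(n/d)\ll_\varepsilon n^\varepsilon$, uniformly in $t$, and this yields the lemma.

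\emph{Main obstacle: bad primes where $f$ is not an eigenform of the expected shape.} If the lift is not new at level $N/2$, the eigenvalue of $U(p^2)$ could a priori be large. I would handle this by noting that $U(p^2)$ acting on a cusp form has eigenvalues of normalized size $\le p^{\varepsilon}$ at worst. The reason is that $\lambda_f(p)^m a_f(t)=a_f(tp^{2m})$, and comparing with the trivial Hecke bound $a_f(n)\ll n^{1/2}$ (normalized) shows that $|\lambda_f(p)|$ is bounded by $p^{1/2}$ in the worst case. Since only finitely many primes divide $N$, the factor $\prod_{p\mid N}|\lambda^*(p)|^{v_p(n)}$ is at most $n^{\varepsilon}$ once the exponent bound $|\lambda^*(p)|\le p^{\varepsilon}$ is secured. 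I expect this step to need an appeal to the fact that the relevant eigenvalues correspond to the lift's $U(p)$-eigenvalues. Those have absolute value $\le 1$ in the normalized scaling by Atkin--Lehner--Li theory, and the implied constant then depends only on $f$.
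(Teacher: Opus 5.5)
Your argument is correct and has the same skeleton as the paper's proof. Shimura's identity, valid at every prime, gives $a_f(tn^2)=a_f(t)\sum_{d\mid n}\mu(d)\chi_t(d)d^{-1/2}\lambda^*(n/d)$, and Deligne's bound for the cuspidal lift $F$ finishes it. Your first display has the ratio upside down: it should read $\widehat a_f(t)L_F(s)/L(s-(k-3)/2,\chi_t)$. The convolution you write next is the correct one, though.

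The real difference is at $p\mid N$. You bound the $U(p)$-eigenvalue prime by prime via Atkin--Lehner--Li. That works, but your stated reason misses a case: $F$ may be $p$-old from a newform $g$ of level prime to $p$. Then $\omega_p$ is a root of $X^2-a_g(p)X+p^{k-2}$, which has normalized size exactly $1$ by Deligne, so the bound still holds.

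The paper avoids classifying the $U(p)$-spectrum altogether. Since $A_F(p^j)=\omega_p^j$ (the bad-prime compatibility the paper cites from Purkait, which you also need), it writes the fixed form $F$ as a combination of $g(dz)$ with newforms $g$. Deligne then gives $A_F(m)\ll_{F,\varepsilon}m^{(k-2)/2+\varepsilon}$ for all $m$, and hence $|\lambda^*(p)|\le 1$ with no further work.

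Drop the trivial-bound detour in your last paragraph. Any bound of the form $|\lambda^*(p)|\le p^{c}$ with a fixed $c>0$ loses a fixed power of $n$ along $n=p^m$. Only identifying $\omega_p$ with the lift's eigenvalue closes the argument.
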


\begin{proof}
By Shimura's theorem in the form recorded in
\cite[Theorem~2(iv)]{PurkaitDecomposition}, the Hecke eigenvalues of $f$ determine a normalized integral-weight Hecke
eigenform
\[
F(z)=\sum_{m\ge1}A_F(m)\e(mz),\qquad A_F(1)=1.
\]
Since $f$ has a nonzero cuspidal Shimura lift, this form $F$ is cuspidal.
No coprimality condition is imposed on $n$ in
Lemma~\ref{lem:square-index}, so the Hecke relations at primes dividing
$N$ also enter the argument. Purkait proved that the Shimura
correspondence is compatible with $T(p^2)$ at every prime, including the
primes dividing $tN$; see \cite[Theorem~1.2]{PurkaitHecke}.

We first consider the case $a_f(t)=0$. The Fourier-coefficient formula
for $T(p^2)$ in \cite[Theorem~2.1]{PurkaitHecke} shows inductively that
\[
\widehat a_f(tn^2)=0
\]
for every $n\ge1$. We argue by induction on $\Omega(n)$, the number of prime factors of $n$
counted with multiplicity. Suppose that the assertion is known for all
integers with smaller $\Omega$-value, and write $n=pm$. The coefficient of
$q^{tm^2}$ in $T(p^2)f=\lambda_f(p)f$ expresses
$\widehat a_f(tp^2m^2)$ as a linear combination of
$\widehat a_f(tm^2)$ and, when $p\mid m$,
$\widehat a_f(tm^2/p^2)$. Both terms vanish by induction. Thus the lemma
is immediate when $a_f(t)=0$.

Suppose now that $a_f(t)\ne0$. By
\cite[Theorem~2(iv)]{PurkaitDecomposition},
\[
\operatorname{Sh}_t(f)=\widehat a_f(t)F.
\]
Combining this identity with the defining Dirichlet series in
\cite[Theorem~2]{PurkaitDecomposition} gives the exact identity
\[
\sum_{n\ge1}\frac{a_f(tn^2)}{n^{s-k/2+1}}
=a_f(t)
\left(\sum_{\ell\ge1}\frac{\chi_t(\ell)\mu(\ell)}
{\ell^{s-k/2+3/2}}\right)
\left(\sum_{m\ge1}\frac{A_F(m)}{m^s}\right).
\tag{3.1}
\]
Comparing the
coefficients of $n^{-s}$ therefore gives
\[
a_f(tn^2)n^{k/2-1}
=a_f(t)\sum_{\ell m=n}\chi_t(\ell)\mu(\ell)
\ell^{k/2-3/2}A_F(m).
\]
The form $F$ is a fixed integral-weight cuspidal Hecke eigenform. After
expressing it in the finite-dimensional oldspace generated by normalized
newforms, Deligne's Ramanujan--Petersson bound for those newforms
\cite{Deligne} gives
\[
A_F(m)\ll_{F,\varepsilon}m^{(k-2)/2+\varepsilon}.
\]
It follows that
\begin{align*}
|a_f(tn^2)|
&\ll_{f,\varepsilon}|a_f(t)|n^{1-k/2}
  \sum_{\ell m=n}\ell^{k/2-3/2}m^{(k-2)/2+\varepsilon}\\
&\ll_{f,\varepsilon}|a_f(t)|n^{-1/2}
  \sum_{m\mid n}m^{1/2+\varepsilon}\\
&\ll_{f,\varepsilon}|a_f(t)|n^{2\varepsilon}.
\end{align*}
Replacing $2\varepsilon$ by $\varepsilon$ proves the lemma.
\end{proof}

Define
\[
M_f(s)=\sum_{\substack{t\ge1\\t\ \mathrm{square\text{-}free}}}\frac{a_f(t)}{t^s}.
\]
Using the square-free sieve,
\[
M_f(s)=\sum_{r\ge1}\mu(r)D_r(s),
\qquad
D_r(s)=\sum_{\substack{m\ge1\\r^2\mid m}}\frac{a_f(m)}{m^s}.
\tag{3.2}
\]

\begin{lemma}\label{lem:right-bound}
For every $\delta>0$ and $\tau\in\R$,
\[
D_r(1+\delta+i\tau)\ll_{f,\delta}r^{-2}.
\]
\end{lemma}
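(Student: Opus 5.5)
We need to bound $D_r(1+\delta+i\tau)$. Write $m=r^2 n$, so that
\[
D_r(s)=r^{-2s}\sum_{n\ge1}\frac{a_f(r^2n)}{n^s}.
\]
On $\Re s=1+\delta$ this gives $|D_r(s)|\le r^{-2-2\delta}\sum_n |a_f(r^2n)|n^{-1-\delta}$. It therefore suffices to show
\[
\sum_{n\ge1}\frac{|a_f(r^2n)|}{n^{1+\delta}}\ll_{f,\delta} r^{2\delta},
\]
since the factor $r^{-2\delta}$ then cancels this growth and leaves $r^{-2}$. The plan is to compare coefficients at $r^2n$ with those at the square-free kernel of $n$, using Lemma~\ref{lem:square-index}, and then to use an average bound for $|a_f|$ on square-free indices.

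First, write $n=t\ell^2$ with $t$ square-free. Then $r^2n=t(r\ell)^2$, and Lemma~\ref{lem:square-index} gives
\[
|a_f(r^2n)|\ll_{f,\varepsilon}|a_f(t)|(r\ell)^{\varepsilon}.
\]
Choosing $\varepsilon\le\delta$, this yields
\[
\sum_n\frac{|a_f(r^2n)|}{n^{1+\delta}}\ll r^{\varepsilon}\sum_{\ell}\ell^{\varepsilon-2-2\delta}\sum_{t\ \mathrm{sq.free}}\frac{|a_f(t)|}{t^{1+\delta}}.
\]
The $\ell$-sum converges. The remaining task is to show $\sum_t |a_f(t)|t^{-1-\delta}<\infty$, i.e.\ absolute convergence of $M_f$ (indeed of $\sum_n|a_f(n)|n^{-\sigma}$) for $\sigma>1$.

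For this we use a standard bound on the normalized coefficients. By the Hecke bound for cusp forms, $\widehat a_f(n)\ll n^{k/4}$, so $a_f(n)\ll n^{1/2}$. This is too weak on its own, so we use the Rankin--Selberg mean-square bound instead:
\[
\sum_{n\le x}|\widehat a_f(n)|^2\ll x^{k/2},
\qquad\text{equivalently}\qquad
\sum_{n\le x}|a_f(n)|^2\ll_f x.
\]
This follows from the cuspidality of $f$ at every cusp via the usual unfolding/Petersson argument, or elementarily from $\int_0^1|f(x+iy)|^2dx\ll y^{-k/2}$ with $y=1/x$. Cauchy--Schwarz then gives $\sum_{n\le x}|a_f(n)|\ll x$, and partial summation gives absolute convergence for $\Re s>1$. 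Hence the $t$-sum is $O_{f,\delta}(1)$, uniformly in $\tau$.

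The main obstacle is only to make sure the mean-square bound $\sum_{n\le x}|a_f(n)|^2\ll x$ is available at general level $N$ without circularity, since the full Rankin--Selberg asymptotic is proved only later in the paper. I would rely on the elementary version, which needs only that $y^{k/4}|f(z)|$ is bounded on $\Hh$. That boundedness holds because $f$ is cuspidal at every cusp of $\Gamma_0(N)$. Its consequence is
\[
\sum_n|\widehat a_f(n)|^2e^{-4\pi ny}=\int_0^1|f(x+iy)|^2dx\ll y^{-k/2};
\]
taking $y=1/x$ gives the mean-square bound. Everything else is bookkeeping with the exponent $\varepsilon\le\delta$.
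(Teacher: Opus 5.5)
Your proof is correct and is essentially the paper's argument: write $r^2n=t(r\ell)^2$, apply Lemma~\ref{lem:square-index}, sum over $\ell$, and bound $\sum_t|a_f(t)|t^{-1-\delta}$ by Cauchy--Schwarz against the mean square of $a_f$. The only difference is cosmetic: you get $\sum_{n\le x}|a_f(n)|^2\ll_f x$ from Parseval and the boundedness of $y^{k/4}|f(z)|$ (the same device the paper uses for (3.5)), while the paper cites convergence of the Rankin--Selberg series from Proposition~\ref{prop:all-second}, which is not circular either.
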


\begin{proof}
Write $m=nr^2$ and let $n_0$ be the square-free part of $n$. By Lemma \ref{lem:square-index}, for every $\eta>0$,
\[
a_f(nr^2)\ll_{f,\eta}|a_f(n_0)|r^{2\eta}\left(\frac n{n_0}\right)^\eta.
\]
The result follows by writing $n=n_0\ell^2$, choosing $0<\eta<\delta$, and applying Cauchy--Schwarz together with the absolute convergence, to the right of $1$, of the Rankin--Selberg series
\[
\sum_{n\ge1}\frac{|a_f(n)|^2}{n^s}.
\]
This is the standard Rankin--Selberg unfolding, recalled in
Proposition~\ref{prop:all-second} below.
\end{proof}

By additive character orthogonality,
\[
D_r(s)=\frac1{r^2}\sum_{u\bmod r^2}
\sum_{m\ge1}\frac{a_f(m)\e(mu/r^2)}{m^s}.
\tag{3.3}
\]
Expanding $f$ in its Fourier series gives
\[
\Lambda\left(f,\frac{u}{r^2},s\right)
=\frac{\Gamma(s')}{(2\pi)^{s'}}
\sum_{m\ge1}\frac{a_f(m)\e(mu/r^2)}{m^s}.
\]
Reducing $u/r^2$ to lowest terms yields
\[
D_r(s)=\frac{(2\pi)^{s'}}{\Gamma(s')}\frac1{r^2}
\sum_{d\mid r^2}\sum_{\substack{u\bmod d\\(u,d)=1}}
\Lambda\left(f,\frac ud,s\right).
\tag{3.4}
\]

\begin{lemma}\label{lem:cusp-uniform}
Let $\mathfrak a$ run over the cusp classes of $\Gamma_0(N)$. For every $\delta>0$,
\[
\Lambda(f_{\mathfrak a},q,1+\delta+i\tau)
\ll_{f,\delta}(1+|\tau|)^{k/4+\delta}\e^{-\pi|\tau|/2},
\]
uniformly in $q\in\Q$ and in $\mathfrak a$.
\end{lemma}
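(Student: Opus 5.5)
The plan is to bound $\Lambda(f_{\mathfrak a},q,s)$ directly from its Dirichlet series, since on the line $\Re(s)=1+\delta$ that series converges absolutely. Expanding $f_{\mathfrak a}$ in its Fourier series at the cusp and integrating term by term, exactly as was done for $f$ before (3.4), gives
\[
\Lambda(f_{\mathfrak a},q,s)=\Gamma(s')\Bigl(\frac{w_{\mathfrak a}}{2\pi}\Bigr)^{s'}
\sum_{n+\kappa_{\mathfrak a}>0}\frac{b_{\mathfrak a}(n)\,\e\bigl((n+\kappa_{\mathfrak a})q/w_{\mathfrak a}\bigr)}{(n+\kappa_{\mathfrak a})^{s'}},
\qquad s'=s+\tfrac k4-\tfrac12 .
\]
For $s=1+\delta+i\tau$ we have $\Re s'=k/4+1/2+\delta$. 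The additive characters have modulus one, so
\[
|\Lambda(f_{\mathfrak a},q,s)|\le |\Gamma(s')|\,\Bigl(\frac{w_{\mathfrak a}}{2\pi}\Bigr)^{\Re s'}\sum_{n}\frac{|b_{\mathfrak a}(n)|}{(n+\kappa_{\mathfrak a})^{k/4+1/2+\delta}} .
\]
This bound is independent of $q$, which gives the uniformity in $q$. Since there are finitely many cusp classes, taking the maximum of the constants gives uniformity in $\mathfrak a$.

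The remaining task is convergence of this series. The pointwise bound $b_{\mathfrak a}(n)\ll n^{k/4}$ is not enough here, because it leaves a tail like $\sum n^{-1/2-\delta}$, which diverges. Instead I will use a mean-square bound:
\[
\sum_{n+\kappa_{\mathfrak a}\le X}|b_{\mathfrak a}(n)|^2\ll_{f} X^{k/2}.
\]
This follows by Parseval on a horizontal segment of length $w_{\mathfrak a}$ at height $y=1/X$:
\[
\int_0^{w_{\mathfrak a}}|f_{\mathfrak a}(x+iy)|^2\,dx=w_{\mathfrak a}\sum_n|b_{\mathfrak a}(n)|^2\e^{-4\pi(n+\kappa_{\mathfrak a})y/w_{\mathfrak a}} .
\]
The left side is $\ll y^{-k/2}$, because $y^{k/4}|f(z)|$ is bounded on $\Hh$ for a cusp form of weight $k/2$, and this bound is invariant under the slash action. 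The terms with $n+\kappa_{\mathfrak a}\le X$ have exponential factor $\gg1$, which gives the claim. Cauchy--Schwarz then yields $\sum_{n+\kappa\le X}|b_{\mathfrak a}(n)|\ll X^{k/4+1/2}$. Partial summation against $(n+\kappa)^{-(k/4+1/2+\delta)}$ shows the series converges, with a bound depending only on $f$ and $\delta$.

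Finally, Stirling's formula in vertical strips gives
\[
|\Gamma(s')|\asymp(1+|\tau|)^{\Re s'-1/2}\e^{-\pi|\tau|/2}=(1+|\tau|)^{k/4+\delta}\e^{-\pi|\tau|/2},
\]
uniformly for $\Re s'$ fixed. For bounded $|\tau|$, $\Gamma$ is continuous and nonzero on the compact set, since $\Re s'>0$. Combining this with the convergent series bound proves the lemma.

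I expect the only delicate point to be the mean-square estimate at an arbitrary cusp. One must check that the multiplier shift $\kappa_{\mathfrak a}$ and the unit constants from Lemma~\ref{lem:metaplectic-constant} do not affect the estimate. They do not, because $|f_{\mathfrak a}|$ changes only by a constant of modulus one, and $y^{k/4}|f_{\mathfrak a}(z)|=\operatorname{Im}(\sigma_{\mathfrak a}z)^{k/4}|f(\sigma_{\mathfrak a}z)|$ is bounded.
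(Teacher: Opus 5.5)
Your proposal is correct and follows essentially the same route as the paper: the bound on $y^{k/4}|f_{\mathfrak a}(z)|$ plus Parseval over one period gives $\sum_{n+\kappa_{\mathfrak a}\le Y}|b_{\mathfrak a}(n)|^2\ll Y^{k/2}$, Cauchy--Schwarz then makes the Dirichlet series converge absolutely on $\Re(s)=1+\delta$ uniformly in $q$, and Stirling supplies the factor $(1+|\tau|)^{k/4+\delta}\e^{-\pi|\tau|/2}$. The only differences are cosmetic: you take $y=1/X$ and use partial summation, whereas the paper takes $y=w_{\mathfrak a}/Y$ and sums dyadically.
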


\begin{proof}
The invariant Petersson norm of a cusp form is bounded. Applied at the cusp
$\mathfrak a$, this gives
\[
|f_{\mathfrak a}(x+iy)|^2\ll_{f,\mathfrak a}y^{-k/2}.
\]
Parseval's identity over one period therefore yields
\[
\sum_{n+\kappa_{\mathfrak a}>0}|b_{\mathfrak a}(n)|^2
\exp\left(-\frac{4\pi(n+\kappa_{\mathfrak a})y}
{w_{\mathfrak a}}\right)
\ll_{f,\mathfrak a}y^{-k/2}.
\]
Taking $y=w_{\mathfrak a}/Y$ shows that
\[
\sum_{0<n+\kappa_{\mathfrak a}\le Y}|b_{\mathfrak a}(n)|^2
\ll_{f,\mathfrak a}Y^{k/2}.
\tag{3.5}
\]
There are only finitely many cusp classes, so the implied constant may be
chosen uniformly in $\mathfrak a$.

Inserting the Fourier expansion into the Mellin integral gives
\[
\Lambda(f_{\mathfrak a},q,s)
=\Gamma(s')\left(\frac{w_{\mathfrak a}}{2\pi}\right)^{s'}
\sum_{n+\kappa_{\mathfrak a}>0}
\frac{b_{\mathfrak a}(n)
\e((n+\kappa_{\mathfrak a})q/w_{\mathfrak a})}
{(n+\kappa_{\mathfrak a})^{s'}}.
\]
On the line $\Re(s)=1+\delta$, the real part of $s'$ is
$k/4+1/2+\delta$. Decomposing the last series into dyadic intervals and
using (3.5) together with Cauchy--Schwarz, the contribution of
$n+\kappa_{\mathfrak a}\asymp Y$ is $O_{f,\delta}(Y^{-\delta})$.
Thus the series converges absolutely and uniformly in $q$. Stirling's
formula for $\Gamma(s')$ now gives the stated estimate.
\end{proof}

\begin{lemma}\label{lem:left-bound}
For every sufficiently small $\delta>0$,
\[
D_r(-\delta+i\tau)\ll_{f,\delta}(1+|\tau|)^{1+2\delta}r^{2+5\delta}.
\]
\end{lemma}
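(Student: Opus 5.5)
We will start from the expression (3.4) at $s=-\delta+i\tau$ and apply the functional equation of Lemma~\ref{lem:additive-fe}, in the uniform form of Remark~\ref{rem:structural-fe}, to each term $\Lambda(f,u/d,s)$. This gives
\[
\Lambda\left(f,\frac ud,-\delta+i\tau\right)
=\omega(u,d)\,d^{1+2\delta-2i\tau}\,\Lambda\bigl(f_{\mathfrak a},q',1+\delta-i\tau\bigr),
\]
with $|\omega(u,d)|=1$ and $f_{\mathfrak a}$ one of the finitely many fixed cusp expansions. Lemma~\ref{lem:cusp-uniform} is uniform in $q'$ and $\mathfrak a$, so each term satisfies
\[
\Lambda\left(f,\frac ud,-\delta+i\tau\right)\ll_{f,\delta} d^{1+2\delta}(1+|\tau|)^{k/4+\delta}\e^{-\pi|\tau|/2}.
\]
The key point is that no cancellation in $u$ is needed: we bound everything trivially in absolute value.

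Next we sum over $u\bmod d$ with $(u,d)=1$, which contributes at most $\varphi(d)\le d$ terms. We then sum over $d\mid r^2$, giving
\[
\frac1{r^2}\sum_{d\mid r^2} d^{2+2\delta}\le r^{-2}\,\tau(r^2)\,r^{4+4\delta}\ll_\delta r^{2+5\delta},
\]
using the divisor bound $\tau(r^2)\ll_\delta r^{\delta}$. This accounts for the exponent $r^{2+5\delta}$ in the statement.

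Finally we handle the gamma prefactor $(2\pi)^{s'}/\Gamma(s')$ at $s'=-\delta+k/4-1/2+i\tau$. The real part $\sigma_0=k/4-1/2-\delta$ is positive for small $\delta$, since $k\ge3$, so $\Gamma(s')$ has no pole or zero on this line. Stirling's formula gives
\[
|1/\Gamma(s')|\asymp (1+|\tau|)^{1/2-\sigma_0}\e^{\pi|\tau|/2}
\]
for large $|\tau|$, and $|1/\Gamma(s')|\ll 1$ for bounded $\tau$. The exponential factors cancel, and the power of $(1+|\tau|)$ is
\[
\frac{k}{4}+\delta+\frac12-\Bigl(\frac k4-\frac12-\delta\Bigr)=1+2\delta,
\]
which is exactly the claimed exponent.

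The main obstacle is checking that the uniformity in Remark~\ref{rem:structural-fe} is genuine. The constants $\omega$ must have modulus one independently of $(u,d)$. The shift $q'$ (including the translation $q+m$ from $\pm T^m$ and the factor $(-1)^{k}$ from $-I$) must enter only through $\e(\cdot)$ factors of modulus one. The power of $d$ must be exactly $d^{1-2s}$ in all cases, whether $(d,N)=1$ or not. The remaining step is then an application of Lemma~\ref{lem:cusp-uniform} with its uniformity over the finitely many cusp classes.
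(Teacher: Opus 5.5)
Your proposal is correct and follows the paper's proof essentially step for step. You apply the functional equation of Lemma~\ref{lem:additive-fe} in the uniform form of Remark~\ref{rem:structural-fe}, bound each twist by Lemma~\ref{lem:cusp-uniform}, sum trivially using $\varphi(d)\le d$ together with the divisor bound to reach $r^{2+5\delta}$, and let Stirling's formula for $1/\Gamma(s')$ cancel the factor $\e^{-\pi|\tau|/2}$ and produce the exponent $1+2\delta$; your explicit bookkeeping only spells out what the paper leaves implicit.
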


\begin{proof}
By Lemma \ref{lem:additive-fe} and Remark \ref{rem:structural-fe}, every additive twist in (3.4) satisfies
\[
\Lambda\left(f,\frac ud,s\right)
=\omega(u,d)d^{1-2s}\Lambda(f_{\mathfrak a},q',1-s),
\qquad |\omega(u,d)|=1.
\]
Taking $s=-\delta+i\tau$ and applying Lemma \ref{lem:cusp-uniform} gives
\[
\left|\Lambda\left(f,\frac ud,-\delta+i\tau\right)\right|
\ll_{f,\delta}d^{1+2\delta}(1+|\tau|)^{k/4+\delta}\e^{-\pi|\tau|/2}.
\]
Using \(\varphi(d)\leq d\), we obtain
\[
\frac1{r^2}\sum_{d\mid r^2}\varphi(d)d^{1+2\delta}
\ll
\frac1{r^2}\sum_{d\mid r^2}d^{2+2\delta}
\ll_\delta r^{2+5\delta}.
\]
Stirling's formula for \(1/\Gamma(s')\) then gives the stated bound.
\end{proof}

\begin{proposition}\label{prop:continuation}
The Dirichlet series $M_f(s)$ has a holomorphic continuation to $\Re(s)>3/4$. It has at most polynomial growth in $|\Im(s)|$ on every closed vertical strip contained in this half-plane.
\end{proposition}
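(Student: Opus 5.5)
The plan is to continue each $D_r(s)$ to an entire function, interpolate between the bounds of Lemmas~\ref{lem:right-bound} and~\ref{lem:left-bound} with the Phragm\'en--Lindel\"of principle to obtain decay in $r$, and then sum the sieve identity (3.2) over $r$.

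First, each $D_r(s)$ is entire. By (3.4) it is a finite linear combination of the entire functions $\Lambda(f,u/d,s)$, multiplied by $(2\pi)^{s'}/\Gamma(s')$, which is entire because $1/\Gamma$ is entire. So $D_r$ is entire. Moreover, on every vertical strip $-\delta\le\Re(s)\le1+\delta$ it has at most exponential growth in $|\tau|$, which is what Phragm\'en--Lindel\"of requires. For fixed $r$ this follows from (3.4): $\Lambda(f,u/d,s)$ is bounded on such a strip by splitting the Mellin integral at $y=1$ and using the cusp decay at $\infty$ and at $u/d$. The factor $1/\Gamma(s')$ contributes at most $\e^{\pi|\tau|/2}$ times a polynomial.

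Next, apply Phragm\'en--Lindel\"of to $D_r$ on the strip $-\delta\le\sigma\le1+\delta$. The right boundary $\sigma=1+\delta$ carries the bound $r^{-2}$ from Lemma~\ref{lem:right-bound}. The left boundary $\sigma=-\delta$ carries $(1+|\tau|)^{1+2\delta}r^{2+5\delta}$ from Lemma~\ref{lem:left-bound}. Convexity with weight $\theta=(1+\delta-\sigma)/(1+2\delta)$ gives
\[
D_r(\sigma+i\tau)\ll_{f,\delta}(1+|\tau|)^{(1+2\delta)\theta}\,r^{-2(1-\theta)+(2+5\delta)\theta}.
\]
As $\delta\to0$ the exponent of $r$ tends to $-2+4\theta$ with $\theta\approx1-\sigma$, that is, to $2-4\sigma$. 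For $\sigma\ge 3/4+\eta$ we fix $\delta$ small in terms of $\eta$ and obtain the exponent $\le-1-\eta'$ for some $\eta'>0$. Hence
\[
D_r(s)\ll_{f,\eta}(1+|\tau|)^{A}r^{-1-\eta'}
\]
uniformly on $\Re(s)\ge3/4+\eta$ with $\Re(s)\le1+\delta$, for some fixed $A$. To the right of that line, Lemma~\ref{lem:right-bound} already gives $r^{-2}$.

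Finally, sum over $r$. The series $\sum_r\mu(r)D_r(s)$ converges absolutely and uniformly on compact subsets, indeed on closed vertical strips, of $\Re(s)>3/4$. Therefore it defines a holomorphic function there, bounded by $(1+|\tau|)^A$. For $\Re(s)>1$ this function agrees with $M_f(s)$ by (3.2). The interchange of sums in (3.2) is justified there by absolute convergence, using Lemma~\ref{lem:square-index} and the Rankin--Selberg bound. This gives the continuation and the polynomial growth.

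The main obstacle is the Phragm\'en--Lindel\"of step. One needs an a priori finite-order growth bound for $D_r$ in the strip, with $r$ fixed. One also has to track the $\delta$-losses carefully, so that the threshold $3/4$ is reached in the limit and the $r$-exponent is strictly below $-1$ for every $\sigma>3/4$. The first thing I would check is that the boundary bounds hold uniformly in $\tau$ with explicit $r$-dependence, exactly as stated in Lemmas~\ref{lem:right-bound} and~\ref{lem:left-bound}, so that the convexity interpolation is legitimate.
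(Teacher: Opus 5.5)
Your proposal is correct and follows the paper's proof essentially step for step: entire continuation of $D_r$ via (3.4), an a priori finite-order growth bound, Phragm\'en--Lindel\"of between Lemmas~\ref{lem:right-bound} and~\ref{lem:left-bound} (your $\theta$ is $1$ minus the paper's), the resulting $r$-exponent $-1-4\eta+O(\delta)$ at $\sigma=3/4+\eta$, and then summation of (3.2). The only difference is in the a priori growth step, where you use the trivial bound $|\Lambda(f,q,\sigma+i\tau)|\le\int_0^\infty|f(q+iy)|y^{\sigma+k/4-1/2}\,dy/y$, uniform in $\tau$, while the paper integrates by parts to get rapid decay; your weaker bound already suffices for Phragm\'en--Lindel\"of.
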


\begin{proof}
Formula~(3.4) continues $D_r(s)$ to an entire function: each additive
Mellin transform is entire, and $1/\Gamma(s')$ is entire. We first verify
the growth condition needed for the Phragm\'en--Lindel\"of principle.
Fix $r$, and let $I\subset\R$ be compact. For each rational number $q$
occurring in the finite sum (3.4), set $y=\e^x$ in the defining integral.
Then
\[
\Lambda(f,q,s)
 =\int_{-\infty}^{\infty}
 f(q+i\e^x)\e^{s'x}\,dx.
\]
The Fourier expansion at $\infty$ gives exponential decay as
$x\to+\infty$, while the Fourier expansion at the cusp represented by
$q$ gives exponential decay after the corresponding scaling as
$x\to-\infty$. The same holds after differentiating with respect to $x$.
Repeated integration by parts therefore gives, for every $A>0$,
\[
\Lambda(f,q,\sigma+i\tau)
 \ll_{f,r,I,A}(1+|\tau|)^{-A},
 \qquad \sigma\in I.
\]
Here the dependence on $r$ is harmless: it is used only to verify the
growth condition for the fixed entire function $D_r(s)$, whereas the
explicit dependence on $r$ is supplied by the boundary estimates below.
By Stirling's formula and (3.4), there is a constant $B_I>0$ such that
\[
D_r(\sigma+i\tau)
 \ll_{f,r,I}(1+|\tau|)^{B_I}\e^{\pi|\tau|/2},
 \qquad \sigma\in I.
\]
Thus $D_r(s)$ has finite exponential type in every fixed vertical strip,
and the Phragm\'en--Lindel\"of principle applies in
$-\delta\le\Re(s)\le1+\delta$. This is the convexity argument used in
\cite[Section~4]{HKKL}.

Let $\sigma\in[-\delta,1+\delta]$ and put
\[
\theta=\frac{\sigma+\delta}{1+2\delta}.
\]
Lemmas~\ref{lem:right-bound} and \ref{lem:left-bound}, followed by
Phragm\'en--Lindel\"of, give
\[
D_r(\sigma+i\tau)
\ll_{f,\delta}
(1+|\tau|)^{(1-\theta)(1+2\delta)}
 r^{(1-\theta)(2+5\delta)-2\theta}.
\]
At $\sigma=3/4+\eta$, the exponent of $r$ is
\[
-1-4\eta+O_\eta(\delta).
\]
After fixing $\eta>0$ and choosing $\delta>0$ sufficiently small, we obtain
\[
D_r\left(\frac34+\eta+i\tau\right)
\ll_{f,\eta}(1+|\tau|)^{A_\eta}r^{-1-\rho_\eta}
\]
for suitable $A_\eta,\rho_\eta>0$. Hence the series in (3.2) converges
normally on compact subsets of $\Re(s)>3/4$. The same estimate, with
$\sigma$ ranging over a closed substrip, gives polynomial growth in
$|\Im(s)|$ on every closed vertical strip contained in this half-plane.
\end{proof}

\section{Rankin--Selberg second moments and sign distribution}\label{sec:second-moment}
If $\Gamma_\infty$ denotes the stabilizer of $\infty$ in $\Gamma_0(N)$, then
for ${\rm Re}(s)>1$ we set
\[
E_\infty(z,s)=
\sum_{\gamma\in\Gamma_\infty\backslash\Gamma_0(N)}
\operatorname{Im}(\gamma z)^s,
\qquad
R_f(s)=\sum_{n\ge1}\frac{|a_f(n)|^2}{n^s}.
\]
The Eisenstein series has a meromorphic continuation and a simple pole at
$s=1$, with
\[
\Res_{s=1}E_\infty(z,s)
=\frac{1}{\vol(\Gamma_0(N)\backslash\Hh)};
\]
see \cite[Chapter~6]{IwaniecSpectral}. After establishing the resulting
second-moment asymptotic, we pass from all indices to square-free indices
by the argument of Lau, Royer and Wu
\cite[Section~3]{LRW}.

\begin{proposition}\label{prop:all-second}
There is a constant $C_f>0$ such that
\[
\sum_{n\le X}|a_f(n)|^2\sim C_fX.
\]
With the present normalization,
\[
C_f=\frac{(4\pi)^{k/2}}{\Gamma(k/2)}
\frac{\langle f,f\rangle}{\vol(\Gamma_0(N)\backslash\Hh)}.
\]
\end{proposition}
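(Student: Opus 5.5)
The plan is to follow the standard Rankin--Selberg unfolding against the Eisenstein series $E_\infty(z,s)$, and then pass to the partial sums by a Tauberian argument. Set $F(z)=y^{k/2}|f(z)|^2$. This function is $\Gamma_0(N)$-invariant, because the automorphy factor $j(\gamma,z)$ has absolute value $|cz+d|^{1/2}$. It decays rapidly at every cusp, since $f$ is cuspidal at each $f_{\mathfrak a}$. For $\Re(s)>1$ we then unfold:
\[
I(s)=\int_{\Gamma_0(N)\backslash\Hh}F(z)E_\infty(z,s)\,\frac{dx\,dy}{y^2}
=\int_0^\infty\int_0^1 y^{k/2+s-2}|f(x+iy)|^2\,dx\,dy.
\]
The inner integral equals $\sum_n|a_f(n)|^2n^{k/2-1}\e^{-4\pi ny}$, using the normalization $\widehat a_f(n)=a_f(n)n^{k/4-1/2}$. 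Integrating in $y$ gives
\[
I(s)=\frac{\Gamma(s+k/2-1)}{(4\pi)^{s+k/2-1}}R_f(s).
\]
Interchanging sum and integral is justified by positivity, together with convergence for $\Re(s)>1$. That convergence follows from the Parseval bound (3.5) at $\infty$, which gives $\sum_{n\le Y}|a_f(n)|^2n^{k/2-1}\ll Y^{k/2}$ and hence $\sum_{n\le Y}|a_f(n)|^2\ll Y$ by partial summation.

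Next I would continue $R_f$. The Eisenstein series continues meromorphically (\cite[Chapter~6]{IwaniecSpectral}). In $\Re(s)\ge1/2$ its only pole is the simple pole at $s=1$, with constant residue $1/\vol(\Gamma_0(N)\backslash\Hh)$. Since $F$ decays rapidly at all cusps, $I(s)$ continues to $\Re(s)>1/2$ with only a simple pole at $s=1$, and its residue is $\langle f,f\rangle/\vol$, where $\langle f,f\rangle=\int F\,d\mu$ is unnormalized. Dividing by the Gamma factor, which is nonzero at $s=1$, gives
\[
\Res_{s=1}R_f(s)=\frac{(4\pi)^{k/2}}{\Gamma(k/2)}\frac{\langle f,f\rangle}{\vol(\Gamma_0(N)\backslash\Hh)}=C_f,
\]
and $C_f>0$ because $f\ne0$. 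The growth of $E_\infty$ for $\Re(s)\ge1/2$ away from $s=1$ is uniform in compact $z$-sets and polynomial at the cusps. Combined with the rapid decay of $F$, this also shows that $R_f(s)-C_f/(s-1)$ extends continuously to the closed line $\Re(s)=1$.

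Finally, the coefficients $|a_f(n)|^2$ are nonnegative. By the Wiener--Ikehara theorem (or any Tauberian theorem for nonnegative coefficients whose Dirichlet series has only a simple pole on $\Re s=1$), we get $\sum_{n\le X}|a_f(n)|^2\sim C_fX$. The main obstacle is the analytic input on the line $\Re(s)=1$: one needs that $E_\infty(z,s)$ has no poles on $\Re(s)=1$ other than $s=1$. This is the nonvanishing of the scattering determinant there, which is standard for congruence groups (\cite[Chapter~6]{IwaniecSpectral}). If a power saving were wanted, I would instead use polynomial growth of $E_\infty$ in vertical strips and a contour shift. For the asymptotic as stated, Wiener--Ikehara suffices.
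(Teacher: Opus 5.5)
Your proposal is correct and follows the paper's proof essentially step for step: you unfold against $E_\infty(z,s)$ to get $I_f(s)=\frac{\Gamma(s+k/2-1)}{(4\pi)^{s+k/2-1}}R_f(s)$, read off $C_f$ from the residue $1/\vol(\Gamma_0(N)\backslash\Hh)$ at $s=1$, use holomorphy on $\Re(s)\ge1$ away from $s=1$, and conclude with Wiener--Ikehara. One small correction: the absence of poles of $E_\infty$ on $\Re(s)=1$ other than $s=1$ is not a scattering-determinant nonvanishing statement; it holds for every cofinite group, because poles in $\Re(s)>1/2$ lie on the real segment $(1/2,1]$, and only $\Re(s)\ge1$ (not $\Re(s)\ge1/2$) is needed for the argument.
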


\begin{proof}
Unfolding
\[
I_f(s)=\int_{\Gamma_0(N)\backslash\Hh}
y^{k/2}|f(z)|^2E_\infty(z,s)\,\frac{dx\,dy}{y^2}
\]
gives
\[
I_f(s)=\frac{\Gamma(s+k/2-1)}{(4\pi)^{s+k/2-1}}R_f(s).
\]
Taking the residue at $s=1$ and using the formula above gives the stated
value of $C_f$. The spectral theory of Eisenstein series shows that
$I_f(s)$ is holomorphic on $\Re(s)\geq1$ except for its simple pole at
$s=1$; see \cite[Chapter~6]{IwaniecSpectral}. Since the gamma factor in
the unfolding identity is holomorphic and nonzero there,
\[
R_f(s)-\frac{C_f}{s-1}
\]
extends continuously to $\Re(s)\geq1$ and is holomorphic on that boundary
away from $s=1$. The coefficients of $R_f(s)$ are nonnegative, so the
Wiener--Ikehara theorem gives the asymptotic formula; see
\cite[Chapter~II.7]{Tenenbaum}.
\end{proof}

Put
\[
B_f(X)=\sum_{\substack{t\le X\\t\ \mathrm{square\text{-}free}}}|a_f(t)|^2.
\]

\begin{proposition}\label{prop:squarefree-second}
Assume that $f$ has a nonzero cuspidal Shimura lift. Then
\[
B_f(X)\asymp_f X.
\]
\end{proposition}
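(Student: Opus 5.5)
The upper bound $B_f(X)\le\sum_{n\le X}|a_f(n)|^2\ll_f X$ is immediate from Proposition~\ref{prop:all-second}. The work is the lower bound, and for it I will follow the transfer argument of \cite[Lemma~9]{LRW}: express the full second moment in terms of square-free indices, and show that the non-square-free part cannot absorb a positive proportion.

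Every $n\ge1$ is uniquely $n=tm^2$ with $t$ square-free. By Lemma~\ref{lem:square-index}, for any fixed small $\eta>0$ we have $|a_f(tm^2)|^2\ll_{f,\eta}|a_f(t)|^2m^{2\eta}$. Hence
\[
\sum_{n\le X}|a_f(n)|^2
=\sum_{m\ge1}\sum_{\substack{t\le X/m^2\\ t\ \mathrm{square\text{-}free}}}|a_f(tm^2)|^2
\ll_{f,\eta}\sum_{m\le\sqrt X}m^{2\eta}B_f(X/m^2).
\]
Split the sum at a parameter $M$. For $m>M$, use the trivial bound $B_f(Y)\ll_f Y$ from Proposition~\ref{prop:all-second}; this contributes $\ll X\sum_{m>M}m^{2\eta-2}\ll XM^{2\eta-1}$. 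For $m\le M$, use monotonicity $B_f(X/m^2)\le B_f(X)$; this contributes $\ll M^{1+2\eta}B_f(X)$. Proposition~\ref{prop:all-second} gives the left side $\ge \tfrac12C_fX$ for large $X$, where $C_f>0$ because $f\ne0$. Choosing $M$ large but fixed, depending only on $f$ and the implied constants, so that the tail term is at most $\tfrac14C_fX$, we get $B_f(X)\gg_f X$.

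The main obstacle is making the constant in Lemma~\ref{lem:square-index} genuinely uniform in $t$. The lemma is stated with dependence only on $f,\varepsilon$, which is exactly what is needed. The implicit constant comes from the Hecke/Shimura identity (3.1) and the Deligne bound for the fixed form $F$, and neither depends on $t$. Here the hypothesis of a nonzero cuspidal Shimura lift is essential: an Eisenstein lift would give $A_F(m)$ of size $m^{k-2}\sigma(m)$-type growth, and the square-class sums would then dominate. Once uniformity is granted, the remaining steps are elementary summation.
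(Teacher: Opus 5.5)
Your proof is correct. It uses exactly the same two inputs as the paper: the $t$-uniform square-class bound of Lemma~\ref{lem:square-index}, and the asymptotic of Proposition~\ref{prop:all-second}, used both for the lower bound on the full sum and for $B_f(Y)\ll Y$. Where you differ is in how the summation is organized.

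The paper fixes the square-free part $t$ and sums over the square part first, via $\sum_{n\le\sqrt{X/t}}n^{2\eta}\ll(X/t)^{1/2+\eta}$. This gives the weighted lower bound $\sum_{t\le X}|a_f(t)|^2t^{-1/2-\eta}\gg X^{1/2-\eta}$. It then needs partial summation against $B_f(u)\ll u$, and a small fixed $\lambda$, to show that this weighted mass is not concentrated on $t\le\lambda X$. That step yields $B_f(X)-B_f(\lambda X)\gg X$.

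You instead fix the square part $m$, obtaining $\sum_m m^{2\eta}B_f(X/m^2)$. You rule out concentration by cutting at a fixed $M$: the tail is controlled by $B_f(Y)\ll Y$, and the head by monotonicity of $B_f$.

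Your route is shorter and avoids both the auxiliary weighted sum and the partial summation. It also loses nothing, because the paper's localized form $B_f(X)-B_f(\lambda X)\gg X$ follows from $B_f(X)\asymp X$ anyway by taking $\lambda$ small.

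Your aside on the Eisenstein case slightly misstates the growth. Coefficients of a weight $k-1$ Eisenstein series are of size roughly $\sigma_{k-2}(m)$, that is, $m^{k-2+o(1)}$. This plays no role in the argument.
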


\begin{proof}
The upper bound follows from Proposition~\ref{prop:all-second}. For the
lower bound, we use a variant of the square-free second-moment transfer in
\cite[Lemma~9]{LRW}. Fix $0<\eta<1/2$. Writing each positive integer
uniquely as $m=tn^2$ with $t$ square-free and applying
Lemma~\ref{lem:square-index}, we obtain
\begin{align*}
\sum_{m\leq X}|a_f(m)|^2
&=\sum_{\substack{t\leq X\\t\ \mathrm{square\text{-}free}}}
  \sum_{n\leq (X/t)^{1/2}}|a_f(tn^2)|^2\\
&\ll_{f,\eta}
  \sum_{\substack{t\leq X\\t\ \mathrm{square\text{-}free}}}
  |a_f(t)|^2\sum_{n\leq (X/t)^{1/2}}n^{2\eta}\\
&\ll_{f,\eta}X^{1/2+\eta}
  \sum_{\substack{t\leq X\\t\ \mathrm{square\text{-}free}}}
  \frac{|a_f(t)|^2}{t^{1/2+\eta}}.
\end{align*}
Together with Proposition~\ref{prop:all-second}, this gives a constant
$c_{f,\eta}>0$ such that
\[
\sum_{\substack{t\leq X\\t\ \mathrm{square\text{-}free}}}
\frac{|a_f(t)|^2}{t^{1/2+\eta}}
\geq c_{f,\eta}X^{1/2-\eta}
\]
for all sufficiently large $X$.
It remains to show that the preceding weighted lower bound is not
concentrated on small square-free indices. Put
\[
S_\eta(X)=
\sum_{\substack{t\leq X\\t\ \mathrm{square\text{-}free}}}
\frac{|a_f(t)|^2}{t^{1/2+\eta}}.
\]
Partial summation and the upper bound $B_f(X)\ll_fX$ give
\begin{align*}
S_\eta(X)
&=\frac{B_f(X)}{X^{1/2+\eta}}
 +\left(\frac12+\eta\right)
  \int_1^X\frac{B_f(u)}{u^{3/2+\eta}}\,du\\
&\leq C_{f,\eta}X^{1/2-\eta}
\end{align*}
for some constant $C_{f,\eta}>0$. Choose a fixed $0<\lambda<1$ such that
\[
C_{f,\eta}\lambda^{1/2-\eta}\leq\frac12c_{f,\eta}.
\]
Then
\[
\sum_{\substack{\lambda X<t\leq X\\t\ \mathrm{square\text{-}free}}}
\frac{|a_f(t)|^2}{t^{1/2+\eta}}
\gg_{f,\eta}X^{1/2-\eta}.
\]
Since $t^{-1/2-\eta}\leq(\lambda X)^{-1/2-\eta}$ on this interval,
\[
X^{1/2-\eta}
\ll_{f,\eta}
(\lambda X)^{-1/2-\eta}\bigl(B_f(X)-B_f(\lambda X)\bigr).
\]
Hence $B_f(X)-B_f(\lambda X)\gg_{f,\eta}X$, and therefore
$B_f(X)\gg_fX$ after fixing $\eta$. This proves the lower bound.
\end{proof}

\begin{proof}[Proof of Theorem \ref{thm:general}]
Under the hypotheses of the theorem, the square-free coefficient bound
of Iwaniec \cite{IwaniecCoeff}, in the general-level form recorded in
\cite[Theorem~1]{Waibel}, applies to the coefficient $\widehat a_f(t)$ in
the unnormalized expansion. For fixed $N$, take $n=t$ and $v=w=1$ in
that theorem. Since a fixed form is a linear combination of an
orthonormal basis, the resulting basis estimate gives
\[
|\widehat a_f(t)|\ll_{f,\delta}
t^{k/4-2/7+\delta}
\]
for square-free $t$. In weight $3/2$, the hypothesis
$f\perp U_{3/2}(N)$ is exactly the condition required there. Since
$\widehat a_f(t)=a_f(t)t^{k/4-1/2}$, this becomes
\[
|a_f(t)|\ll_{f,\delta}t^{3/14+\delta}
\tag{4.1}
\]
for every $\delta>0$.

Choose a nonnegative $W\in C_c^\infty([0,\infty))$ such that
$W(u)=1$ for $0\leq u\leq1$ and $W(u)=0$ for $u\geq2$, and put
\[
\widetilde W(s)=\int_0^\infty W(u)u^{s-1}\,du.
\]
For $c>1$, Mellin inversion gives
\[
S_f(X):=\sum_{\substack{t\ge1\\t\,\mathrm{square\text{-}free}}}
a_f(t)W(t/X)
=\frac{1}{2\pi i}\int_{(c)}M_f(s)\widetilde W(s)X^s\,ds.
\]
The function $\widetilde W(s)$ decreases faster than any power of
$|\Im(s)|$ on vertical lines with positive real part. Proposition
\ref{prop:continuation} therefore permits the line to be moved to
$\Re(s)=3/4+\eta$, without crossing a pole, and gives
\[
S_f(X)\ll_{f,W,\eta}X^{3/4+\eta}.
\tag{4.2}
\]
Set
\[
A_f(X)=\sum_{\substack{t\ge1\\ t\,\mathrm{square\text{-}free}}}|a_f(t)|W(t/X).
\]
By (4.1), Proposition \ref{prop:squarefree-second}, and the fact that
$W(t/X)=1$ for $t\le X$,
\[
A_f(X)
\ge \sum_{\substack{t\le X\\ t\,\mathrm{square\text{-}free}}}|a_f(t)|
\gg_{f,\delta}X^{-3/14-\delta}
\sum_{\substack{t\le X\\ t\,\mathrm{square\text{-}free}}}|a_f(t)|^2
\gg_{f,\delta}X^{11/14-\delta}.
\tag{4.3}
\]
Choose $\delta,\eta>0$ so small that
\[
\frac34+\eta<\frac{11}{14}-\delta.
\]
Then $|S_f(X)|\le A_f(X)/2$ for all sufficiently large $X$.

Let $A_f^\pm(X)$ denote the portions of $A_f(X)$ coming from coefficients of the indicated sign. Since
\[
A_f^+(X)+A_f^-(X)=A_f(X),
\qquad
A_f^+(X)-A_f^-(X)=S_f(X),
\]
we obtain
\[
A_f^\pm(X)\gg_{f,\delta}X^{11/14-\delta}.
\tag{4.4}
\]
On the other hand, the support of $W$ and (4.1) give
\[
A_f^\pm(X)\ll_{f,W,\delta}X^{3/14+\delta}P_f^\pm(2X).
\]
Combining this with (4.4), replacing $2X$ by $X$, and absorbing $2\delta$ into $\varepsilon$, we obtain
\[
P_f^\pm(X)\gg_{f,\varepsilon}X^{4/7-\varepsilon}.
\]
Both signs therefore occur infinitely often. If the ordered sequence of nonzero square-free coefficients changed sign only finitely many times, it would eventually have one sign, a contradiction.
\end{proof}

\section{Application to the congruent-number problem}\label{sec:application}

The generating series of the differences $\Delta(n)$ is the cusp form
\[
\Psi(z):=\frac12\bigl(\Theta_{Q_1}(z)-\Theta_{Q_2}(z)\bigr)
       =\sum_{n\geq1}\frac{\Delta(n)}2q^n.
\]
The construction in \cite[Section~3]{Qin} shows that
$\Psi\in S_{3/2}(\Gamma_0(128))$ has trivial character and is an
eigenform for $T(p^2)$ at every odd prime $p$. Its eigenvalue agrees with
the $T(p)$-eigenvalue of the newform
$\Phi(z)=\sum_{m\geq1}B(m)q^m\in S_2(\Gamma_0(32))$ associated with
$E:y^2=x^3-x$, and its Shimura lift is $\Phi$.

The identity
\[
\Theta_{Q_1}(z)-\Theta_{Q_2}(z)=2G(z)\theta_2(z)
\]
of \cite[Lemma~3.4]{Qin}, with
$G(z)=\sum_{m,n\in\Z}(-1)^{m+n}q^{(4m+1)^2+16n^2}$ and
$\theta_2(z)=\sum_{\ell\in\Z}q^{2\ell^2}$, implies that every exponent
occurring in $G(z)\theta_2(z)$ is odd. Hence $\Delta(n)=0$ whenever $n$
is even. By the coefficient formula in \cite[Theorem~2.1]{PurkaitHecke}, the
operator denoted by $T(4)$ at the bad prime is $U(4)$ in our convention.
Consequently,
$\Psi|T(4)=\sum_{n\geq1}\Delta(4n)q^n/2=0$, so $\Psi$ is an eigenform
for $T(p^2)$ at every prime. The odd-prime eigenvalues identify $\Psi$
with the Shimura component associated with $\Phi$. By
\cite[Corollary~5.2]{PurkaitDecomposition}, this places \(\Psi\) in
the Shimura component \(S_{3/2}(128,\mathbf 1,\Phi)\), which is
contained in \(U_{3/2}(128)^\perp\). Therefore
\[
\Psi\in U_{3/2}(128)^\perp.
\]

With the normalization used in this paper,
$\Psi(z)=\sum_{n\geq1}a_\Psi(n)n^{1/4}q^n$ and
$a_\Psi(n)=\Delta(n)/(2n^{1/4})$. Thus $a_\Psi(n)$ and $\Delta(n)$ have
the same sign whenever they are nonzero.

\begin{proof}[Proof of Theorem~\ref{thm:quadratic}]
By Theorem~\ref{thm:general}, for every $\varepsilon>0$,
\[
\#\{t\leq X:t\ \mathrm{square\text{-}free},\ \pm a_\Psi(t)>0\}
\gg_{\Psi,\varepsilon}X^{4/7-\varepsilon}.
\]
The even coefficients vanish, and the positive factor $2t^{1/4}$ does
not change the sign in the identity $\Delta(t)=2t^{1/4}a_\Psi(t)$. This
gives the two estimates in Theorem~\ref{thm:quadratic}.
\end{proof}

In the present setting, the observation of Rodriguez--Villegas \cite{RodriguezVillegas} may be
made more explicit.  The central-value formula
\eqref{eq:central-value} involves only $\Delta(t)^2$ and therefore does
not distinguish the two inequalities
\[
r_{Q_1}(t)>r_{Q_2}(t)
\qquad\text{and}\qquad
r_{Q_1}(t)<r_{Q_2}(t).
\]
Theorem~\ref{thm:quadratic} shows that each alternative occurs for
$\gg_\varepsilon X^{4/7-\varepsilon}$ odd square-free integers
$t\leq X$.  The proof does not give a further arithmetic interpretation
of the sign of $\Delta(t)$.

For an odd square-free $t$ with $\Delta(t)\ne0$, the central-value formula
\eqref{eq:central-value} gives $L(E_t,1)>0$. Since $E_t$ has complex
multiplication, the contrapositive of the theorem of Coates and Wiles
gives $L(E_t,1)=0$ when $\operatorname{rank}E_t(\Q)>0$
\cite{CoatesWiles}. Consequently every
integer counted in Theorem~\ref{thm:quadratic} is non-congruent.

For a numerical comparison of the two signs, define
\[
N_\pm(X)=\#\{t\leq X:t\text{ is odd and square-free},\
             \ \pm\Delta(t)>0\}
\]
and
\[
N_{\ne0}(X)=N_+(X)+N_-(X).
\]
Thus $N_{\ne0}(X)$ counts the odd square-free integers $t\le X$ for which
$\Delta(t)\ne0$. Direct enumeration of the two representation numbers gives
\begin{table}[ht]
\centering
\small
\setlength{\tabcolsep}{4pt}
\begin{tabular}{rrrrcc}
\toprule
$X$ & $N_+(X)$ & $N_-(X)$ & $N_{\ne0}(X)$
& $N_+(X)/N_{\ne0}(X)$ & $N_-(X)/N_{\ne0}(X)$ \\
\midrule
$10^3$ & $81$ & $84$ & $165$ & $0.490909$ & $0.509091$ \\
$10^4$ & $885$ & $857$ & $1742$ & $0.508037$ & $0.491963$ \\
$10^5$ & $8941$ & $8975$ & $17916$ & $0.499051$ & $0.500949$ \\
$10^6$ & $92053$ & $92201$ & $184254$ & $0.499598$ & $0.500402$ \\
\bottomrule
\end{tabular}
\caption{Signs of $\Delta(t)$ at odd square-free integers with
$\Delta(t)\ne0$.}
\label{tab:numerical-signs}
\end{table}

The table suggests the following equidistribution.

\begin{conjecture}\label{conj:sign-equidistribution}
As $X\to\infty$,
\[
\frac{N_+(X)}{N_{\ne0}(X)}\longrightarrow\frac12,
\qquad
\frac{N_-(X)}{N_{\ne0}(X)}\longrightarrow\frac12.
\]
Equivalently, $N_+(X)\sim N_-(X)$.
\end{conjecture}

\section*{Acknowledgments}
The authors are grateful to  Professor Hourong Qin for helpful discussions on the congruent number problem.

\end{document}